\def\ints{{\mathbb Z}}
\def\rats{{\mathbb Q}}
\def\proj{{\mathbb P}}
\def\FF{{\mathbb F}}
\def\Frac{{\text{Frac}}}
\DeclareMathOperator{\Aut}{Aut}
\def\Gal{{\text{Gal}}}
\def\Ind{{\text{Ind}}}
\def\Spec{{\mbox{Spec }}}
\def\mc#1{\mathcal{#1}}
\def\ol#1{\overline{#1}}
\newtheorem{theorem}{Theorem}[section]
\newtheorem{prop}[theorem]{Proposition}
\newtheorem{lemma}[theorem]{Lemma}
\newtheorem{corollary}[theorem]{Corollary}
\theoremstyle{definition}
\newtheorem{remark}[theorem]{Remark}
\newtheorem{defn}[theorem]{Definition}
\numberwithin{equation}{section}
\newtheorem*{mainresult}{Theorem \ref{Tmain}}
\begin{document}

\title[Good reduction of three-point covers]{Good reduction of three-point Galois covers} 

\author{Andrew Obus}
\email{andrewobus@gmail.com}
\address{University of Virginia, 141 Cabell Drive, Charlottesville, VA
22904}
\classification{Primary 14H57, 11G32 14H30; Secondary 14H25, 14G20}
\keywords{three-point cover, good reduction, auxiliary cover, stable model}
\thanks{The author was supported by an NSF Postdoctoral Research
  Fellowship in the Mathematical Sciences, as well as NSF FRG grant DMS-1265290.  This paper was conceived and
written during a visit to the Max-Planck-Institut f\"{u}r Mathematik in Bonn.}

\begin{abstract}
Michel Raynaud gave a criterion for a three-point $G$-cover $f: Y \to X = \proj^1$, defined over a $p$-adic field $K$, to have good reduction.
In particular, if the order of a $p$-Sylow subgroup of $G$ is $p$, and the number of conjugacy classes of elements of order $p$ is greater than the 
absolute ramification index $e$ of $K$, then $f$ has potentially good reduction.  We give a different proof of this criterion, which extends to the 
case where $G$ has an arbitrarily large \emph{cyclic} $p$-Sylow subgroup, answering a question of Raynaud.  
We then use the criterion to give a family of examples 
of three-point covers with good reduction to characteristic $p$ and arbitrarily large $p$-Sylow subgroups.
\end{abstract}

\maketitle

\section{Introduction}\label{Sintro}
This paper is about three-point  
$G$-Galois covers of the projective line (that is, $G$-Galois covers $f:Y \to X = \proj^1$ that are \'{e}tale outside of $\{0, 1, \infty\} \subseteq X$).
In particular, we give a criterion for such covers to have potentially 
good reduction to characteristic $p$, when they are defined over a mixed characteristic $(0, p)$
discrete valuation field.  If $p \nmid |G|$, then it is known (\cite{sga1}) that the cover has potentially good reduction.
If $p$ divides the order of any ramification index, then the cover will have bad reduction 
(see \S\ref{Sstable} for what we specifically mean by this).  When $p$ divides $|G|$, but not any ramification index (the ``in-between" case), then 
it can be quite difficult to determine whether the cover has potentially good reduction.  
Our main theorem is the following (which is phrased equivalently, although slightly differently, in the body of the paper).

\begin{mainresult}
Let $G$ be a finite group with cyclic $p$-Sylow subgroup.
Let $K_0 = \Frac(W(k))$, where $k$ is an algebraically closed field of characteristic $p$.  Let $K/K_0$ be a finite extension of degree $e(K)$, 
where $e(K)$ is less than the number of conjugacy classes of order $p$ in $G$. 
If $f: Y \to X = \proj^1$ is a three-point $G$-cover defined over $K$ (as a $G$-cover), then $f$ has potentially good reduction,
realized over a tame extension $L/K$ of degree dividing the exponent of the center $Z(G)$ of $G$. 
In particular, if $Z(G)$ is trivial, then $f$ has good reduction.
\end{mainresult}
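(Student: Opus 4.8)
The plan is to study the stable reduction of $f$ and to prove that, writing $s$ for the number of conjugacy classes of elements of order $p$ in $G$, the hypothesis $e(K)<s$ forces the special fiber of the stable model to consist only of the component onto which $X$ reduces; this is exactly potentially good reduction. After a finite base change I may assume $f$ has a stable model $f^{\mathrm{st}}\colon Y^{\mathrm{st}}\to X^{\mathrm{st}}$ over the ring of integers, with special fiber $\ol{f}\colon \ol{Y}\to\ol{X}$. Since $f$ is a three-point cover, $\ol{X}$ is a tree of projective lines: a distinguished original component $\ol{X}_0$ carrying (the specializations of) the marked points $0,1,\infty$, together with a collection of tails. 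The cover has good reduction precisely when $\ol{X}=\ol{X}_0$, so I must rule out the tails that do not carry a branch point (the \emph{new tails}); there are at most three tails carrying a branch point, and these do not by themselves obstruct good reduction.

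The analysis of new tails is cleanest when the $p$-Sylow is normal, so, following Raynaud \cite{Ra:sp}, I would first replace $f$ by an auxiliary cover $f^{\mathrm{aux}}$ whose Galois group has normal cyclic $p$-Sylow subgroup and whose stable reduction has the same new tails, carrying the same inertia data, as that of $f$. The cyclicity of $P$ is exactly what makes this construction and the ensuing structure theory of the degeneration available; this is the setting analyzed by Obus \cite{Ob:fm1,Ob:fm2,Ob:vc}, building on Henrio \cite{He:ht} and Wewers \cite{We:mc}. It therefore suffices to bound the new tails of $f^{\mathrm{aux}}$, where the cover above each new tail is (essentially) a cyclic $p$-power cover with a single wild point, controlled by an eigen-differential on $\ol{X}_0$.

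The crux is the counting step. To each new tail $b$ I attach the conjugacy class $C_b\subseteq G$ of a generator of its inertia group; because $P$ is cyclic and $b$ genuinely carries wild degeneration, $C_b$ is the class of an element of order exactly $p$. The group $\Gal(\ol{K}/K)$ permutes the new tails while fixing $0,1,\infty$, and $C_b$ is constant along its orbits. The engine of the argument is the vanishing cycles formula of \cite{Ob:vc}, which converts the geometry of the degeneration on $\ol{X}_0$ into a numerical identity that bounds the total effective invariant carried by the new tails in terms of $e(K)$; combined with the group theory—distinct $\Gal(\ol{K}/K)$-orbits of new tails force distinct order-$p$ classes among the $s$ available, and each contributes at least one unit to that invariant—this shows that the presence of any new tail forces $e(K)\ge s$. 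Since $e(K)<s$ by hypothesis, there is no new tail and $f$ has potentially good reduction. I expect reconciling the vanishing cycles identity with the distinctness of the conjugacy-class invariants—the precise place where cyclicity of $P$ and the shape of the ramification filtration of $K$ enter—to be the main obstacle.

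It remains to descend the good model from $\ol{K}$ to a small field. Here I compare the field of moduli of the $G$-cover with a field of definition of the cover together with its (now smooth) model: for $G$-covers this discrepancy is a twisting/gerbe phenomenon governed by the center $Z(G)$, and rigidity of the smooth reduction lets me realize the good model over a tame extension $L/K$ whose degree divides the exponent of $Z(G)$. In particular, when $Z(G)$ is trivial there is no descent obstruction, $L=K$, and $f$ itself has good reduction over $K$.
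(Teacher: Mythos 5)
Your proposal correctly identifies several ingredients the paper also uses (the auxiliary cover, the vanishing cycles formula, and the descent via $Z(G)$ at the end), but the central step is both misframed and missing. First, the misframing: for a three-point cover with prime-to-$p$ branching, bad reduction forces the original component to be inseparable (Lemma \ref{Letaletail}), so the branch points specialize to \emph{primitive} \'etale tails, which are genuine extra components of $\ol{X}$. Good reduction is equivalent to $\ol{X}=\ol{X}_0$, i.e.\ to there being \emph{no} tails at all; ruling out only the new tails does not suffice. For instance, the vanishing cycles formula $1=\sum_{\mathrm{new}}(\sigma_b-1)+\sum_{\mathrm{prim}}\sigma_b$ admits solutions with three primitive tails of invariant $1/m_G$ each and no new tails, and nothing in your sketch excludes such a degeneration.

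Second, and more seriously, your ``crux'' --- attaching order-$p$ conjugacy classes to Galois orbits of new tails and playing this off against the vanishing cycles formula --- is essentially Raynaud's original argument for $v_p(|G|)=1$, which runs through showing that the wild monodromy (the action of wild inertia of $G_K$ on the stable reduction) is trivial. The paper states explicitly in the introduction that it is \emph{not known} whether this works when the $p$-Sylow subgroup is cyclic of order greater than $p$; the whole point of the paper is to get around that obstacle. Note also that the vanishing cycles formula involves only $r-2=1$ and the $\sigma_b$; the quantity $e(K)$ does not appear in it, and your sketch supplies no mechanism linking $e(K)$ to the number of tails or to the number of conjugacy classes, so the asserted conclusion ``any new tail forces $e(K)\ge s$'' is unsupported. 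The paper's actual route is different: the three-point hypothesis plus vanishing cycles forces the strong auxiliary cover to be of \emph{multiplicative type}, hence the stable model has multiplicative ($\mu_{p^n}$-torsor) reduction over the original component (Corollary \ref{Cmultred}); on the other hand, $e(K)<\frac{p-1}{m_G}$ implies, via Lemma \ref{Llowramindex} and the noncommutativity statement Proposition \ref{Pval0}, that the relevant $\ints/p$-extension of complete discrete valuation rings over the generic point of the original component is naively ramified and can never become of $\mu_p$-type after base change (Proposition \ref{Pnotmult}). The resulting contradiction rules out bad reduction entirely --- primitive and new tails included --- without ever analyzing the Galois action on the tails. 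Your final descent step (degree dividing the exponent of $Z(G)$, trivial when $Z(G)=1$) does match the paper, which invokes \cite[Proposition 4.1.2]{Ra:sp} and the observation that $p\nmid|Z(G)|$.
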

Note that, if $f$ is a cover satisfying the hypotheses of Theorem \ref{Tmain}, then the ramification indices of $f$ are all prime to $p$ 
(\cite[Lemma 4.2.13]{Ra:sp}).

The immediate motivation for Theorem \ref{Tmain} is the result \cite[Th\'{e}or\`{e}me 0]{Ra:sp} of Raynaud, which is Theorem \ref{Tmain}
in the case that the $p$-Sylow subgroup of $G$ has order exactly $p$.    
Raynaud asked (\cite[Question 6.2.1]{Ra:sp}) whether his theorem could be extended to the case where $G$ has a cyclic $p$-Sylow subgroup.
This was shown to be true
under a quite strong assumption in \cite[Corollary 5.11]{Ob:vc} (that the stable reduction of the cover has ``no new \'{e}tale tails"), 
and the current paper removes this assumption.

Raynaud's proof (in the $v_p(|G|) = 1$ case) was based on a study of the \emph{stable model}
of the cover $f$.  In particular, he showed that the action of the absolute Galois group $G_K$ on the stable reduction $\ol{f}$ of $f$
factors through a quotient of prime-to-$p$ order (i.e., that the ``wild monodromy" is trivial), and was able to conclude the good reduction from this.  
The author undertook a detailed study of this Galois action in \cite{Ob:vc}, but it is
still not clear whether the wild monodromy must be trivial for three-point covers as in Theorem \ref{Tmain}.  We are able to get around this obstacle 
by analyzing the arithmetic of certain extensions of discrete valuation rings attached to the stable model of $f$, see \S\ref{Splan}.

Of course, the greater background motivation for our result is to understand the tame fundamental group $\pi_1(U_k)^{\text{tame}}$ of  
$U_k := \proj^1_k \backslash \{0, 1, \infty\}$, where $k$ is algebraically closed of characteristic $p$.
If $K$ is an algebraic closure of $\Frac(W(k))$ and $U_{K} = \proj^1_{K} \backslash \{0,1, \infty\}$, 
then Grothendieck (\cite{sga1}) showed that there exists a well-defined surjection
$$pr: \pi_1(U_{K})^{p\text{-tame}} \to \pi_1(U_k)^{\text{tame}}$$
up to conjugation (here $\pi_1(U_{K})^{p\text{-tame}}$ is the inverse limit of the automorphism groups of $p$-tame \'{e}tale Galois covers of $U_K$,
that is, covers 
whose completion to a branched cover of $\proj^1_{K}$ has prime-to-$p$ branching indices).    
The surjection $pr$ is an isomorphism on the maximal prime-to-$p$ quotients.  Understanding the kernel of $pr$ (equivalently, 
$\pi_1(U_k)^{\text{tame}}$)
is equivalent to determining which $p$-tame three-point $G$-Galois covers of $\proj^1_{K}$ have good reduction.  
For a purely group-theoretic translation of our result, see Appendix \ref{Agroup}.

We mention that Ihara (\cite{Ih:cq}) has asked a related question, which he believes to have a positive answer.  
Namely, let $f$ be a three-point $G$-cover defined over $K$ as in Theorem \ref{Tmain}, with $G$ an \emph{arbitrary} finite group, and $e(K) < p-1$
(of course, $G$ must be generated by two elements).
Assume further that there is a $K$-rational point of $\proj^1 \backslash \{0, 1, \infty\}$ above which $f$ splits completely.  Does this imply that $f$ has
good reduction?  Ihara answers this question positively when $G$ is solvable (see \cite[\S4.3]{Ih:cq}).  He also shows that it is enough to show that
$f$ has potentially good reduction (\cite[Proposition 1]{Ih:cq}).  For a possible connection between this question and the techniques used in this paper,
see Remark \ref{Rihara}.

\subsection{Plan of the proof}\label{Splan}
Similarly to the proof of Raynaud, our proof of Theorem \ref{Tmain} depends upon understanding the stable reduction 
$\ol{f}: \ol{Y} \to \ol{X}$ of the $G$-cover $f$ to characteristic $p$.  We will recall the details we need in \S\ref{Sstable}.
If $f$ has bad reduction, there will be irreducible components of $\ol{X}$ above which the map $\ol{f}$ is inseparable, where information from the
original cover is seemingly lost.  But we are able to show that, when $f$ is a three-point cover with bad reduction, 
the reduction is \emph{multiplicative}.  That is, the 
action of certain subgroups $\ints/p^i \leq G$ on certain irreducible components of $\ol{Y}$ reduces to that of the multiplicative group 
scheme $\mu_{p^i}$.
This is done by generalizing a result of Wewers (\cite{We:mc}) on multiplicative reduction of $\ints/p \rtimes \ints/m$-covers ($p \nmid m$) to the case of
$\ints/p^i \rtimes \ints/m$-covers.  In \S\ref{Saux} we use the construction of the \emph{auxiliary cover} (originally by Raynaud in \cite{Ra:sp}, 
generalized by the author in \cite{Ob:fm2}), to reduce the case of general $G$ with cyclic $p$-Sylow subgroup to the case
of $\ints/p^i \rtimes \ints/m$.

We show independently in \S\ref{Sgoodred} (Proposition \ref{Pnotmult})
that a three-point $G$-cover (or even a $G$-cover of $\proj^1$ branched at arbitrarily
many equidistant points) defined over a ``small" field (i.e., a field $K$ such as in Theorem \ref{Tmain}) cannot possibly have multiplicative
reduction.  This uses results in \S\ref{Sval} on Galois extensions of mixed characteristic discrete valuation fields.  This allows us to
conclude Theorem \ref{Tmain}.  In \S\ref{Sexamples}, we apply Theorem \ref{Tmain} to give examples of infinite families of
covers with good reduction and arbitrarily large $p$-Sylow subgroups.  

\subsection{Notation}
For any group $G$ with a cyclic $p$-Sylow subgroup $P$, we set $m_G := |N_G(P)|/|Z_G(P)|$, the order of the normalizer of $P$ divided by the order of
the centralizer.  A \emph{$G$-Galois cover} (or just \emph{$G$-cover}) 
of projective, smooth, geometrically integral curves over a field $K$ is a finite map 
$f: Y \to X$ such that $K(Y)/K(X)$ is a $G$-Galois extension.  
If $f$ is a $G$-Galois cover, with $y \in Y$ such that $f(y) = x$, then the 
\emph{ramification index} of $y$ is equal to the \emph{branching index} of $x$, which is the ramification index of the extension
$\hat{\mc{O}}_{Y, y} \to \hat{\mc{O}}_{X, x}$ of complete local rings
(that is, the valuation of a uniformizer of $\hat{\mc{O}}_{X, x}$ in
$\hat{\mc{O}}_{Y, y}$.  If this index is greater than 1, then $y$ is called a \emph{ramification point} 
and $x$ is a \emph{branch point}. 

Let $f: Y \to X$ be any morphism of schemes and assume $H$ is a finite group with $H \hookrightarrow \Aut(Y/X)$.  If
$G$ is a finite group containing $H$, then we define the map $\Ind_H^G f: \Ind_H^G Y \to X$ by setting $\Ind_H^G Y$ to be a disjoint union of
$[G:H]$ copies of $Y$, indexed by the left cosets of $H$ in $G$, and applying $f$ to each copy.  
The group $G$ acts on $\Ind_H^G Y$, and the stabilizer
of each copy of $Y$ in $\Ind_H^G Y$ is a conjugate of $H$.

The valuation on any mixed characteristic discrete valuation field $K$ is written $v_K$, and is normalized so that the value group is $\ints$.
The symbol $\zeta_n$ always represents a primitive $n$th root of unity. For any mixed characteristic discrete valuation ring or field $R$, the
absolute ramification index (i.e., the valuation of $p$) is written $e(R)$. 

\begin{acknowledgements}
The main result of this paper is the first result I had originally wanted to prove for my Ph. D. thesis.  While I was not able to succeed
at the time, I was able to lay much of the essential groundwork for this paper.  I thank my thesis advisor, David Harbater, for 
invaluable help with this process.  I also thank Irene Bouw, Gerd Faltings, Michel Raynaud, and Kirsten Wickelgren for useful comments.
\end{acknowledgements}

\section{Extensions of discrete valuation fields}\label{Sval}
Let $L/K$ be a finite separable extension of complete discrete valuation fields (DVFs), 
with valuation rings $O_L/O_K$ and residue fields $\kappa_L/\kappa_K$.  
The extension $L/K$ (likewise $O_L/O_K$) 
is called \emph{unramified} if a uniformizer of $K$ is also a uniformizer of $L$, and the extension $\kappa_L/\kappa_K$ is separable.
If the extension is not unramified, it is called \emph{ramified}.  If $\kappa_L/\kappa_K$ is inseparable, the extension is called \emph{fiercely
ramified}.  If a uniformizer of $K$ is not a uniformizer of $L$, we will call the extension \emph{naively ramified}.  An extension that is not naively ramified 
is called \emph{weakly unramified} (it might be fiercely ramified or unramified).  Furthermore, 
if $L/K$ is a (not necessarily finite) extension of complete DVFs 
such that $K$ has perfect residue field, it is called \emph{unramified} if a uniformizer of $K$ is a uniformizer of $L$.

Now, let us assume that $L$ and $K$ have characteristic $(0, p)$, and that $L/K$ is a $\ints/p^n$-extension, for $n \geq 1$.  
The extension $L/K$ (likewise $O_L/O_K$) is said to be \emph{of $\mu_{p^n}$-type} if $\zeta_{p^n} \in K$ and $L/K$ is
a Kummer extension that can be generated by extracting a $p^n$th root 
of an element $a \in K$ such that $v_K(a) = 0$ and the reduction of $a$ is not a $p$th
power in $\kappa_K$.  
Such an extension is clearly fiercely ramified and weakly unramified,
with an inseparable residue field extension of degree $p^n$.  If $F \subseteq K$ is a complete DVF, then 
the extension $L/K$ (likewise $O_L/O_K$) 
is \emph{potentially of $\mu_{p^n}$-type with respect to $F$} if there exists a finite extension $F'/F$ of complete DVFs such that
the base change $L'/K' := (L \otimes_F F')/ (K \otimes_F F')$ of $L/K$ is a field extension of $\mu_{p^n}$-type.  

The terminology ``$\mu_{p^n}$-type" comes from the fact that the corresponding map $\Spec O_L \to \Spec O_K$ 
is a torsor under the group scheme $\mu_{p^n}$.

\begin{lemma}\label{Lmuptype}
Let $L/K$ be a $\ints/p^n$-extension of characteristic $(0, p)$ complete DVFs such that $\zeta_{p^n} \in K$, 
given as a Kummer extension by extracting a $p^n$th root of $a$, where
$v_K(a) = 0$.  For any complete DVF $F \subseteq K$, we have that $L/K$ is potentially of $\mu_{p^n}$-type with respect to $F$ if and only if it is of 
$\mu_{p^n}$-type.
\end{lemma}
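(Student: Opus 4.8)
The plan is to dispatch the two directions asymmetrically. The forward implication is immediate: if $L/K$ is already of $\mu_{p^n}$-type, then taking $F' = F$ gives $K' = K$ and $L' = L$, and $L'/K'$ is then literally a field extension of $\mu_{p^n}$-type, so $L/K$ is potentially of $\mu_{p^n}$-type with respect to $F$.

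The substance is the reverse implication, and the whole argument rests on Kummer theory together with careful bookkeeping of valuations and residues. First I would record that, since $L/K$ is a $\ints/p^n$-Kummer extension, $K$ — and hence any extension of it — contains $\mu_{p^n}$, so Kummer theory is available over the fields we produce. Choosing a finite extension $F'/F$ witnessing potentiality, I set $K' = K \otimes_F F'$ and $L' = L \otimes_F F'$, both fields by hypothesis, with an induced inclusion of residue fields $\kappa_K \hookrightarrow \kappa_{K'}$. Using the identification $L \otimes_F F' = L \otimes_K K'$, the given presentation $L = K(a^{1/p^n})$ shows $L' = K'(a^{1/p^n})$, so $a$ remains a generator of the $\ints/p^n$-extension $L'/K'$ after base change.

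By hypothesis $L'/K'$ is of $\mu_{p^n}$-type, so it also admits a generator $a' \in K'$ with $v_{K'}(a') = 0$ and $\overline{a'}$ not a $p$th power in $\kappa_{K'}$. The key step is to compare the two generators $a$ and $a'$ of the same cyclic degree-$p^n$ extension $L'/K'$: Kummer theory forces their classes in $K'^{\times}/(K'^{\times})^{p^n}$ to generate the same cyclic subgroup of order $p^n$, whence $a' = a^{j} u^{p^n}$ for some $u \in K'^{\times}$ and some $j$ coprime to $p$. Since $v_K(a) = 0$ gives $v_{K'}(a) = 0$, comparing valuations forces $v_{K'}(u) = 0$, so $u$ is a unit and reduction yields $\overline{a'} = \overline{a}^{\,j}\,\overline{u}^{\,p^n}$ in $\kappa_{K'}$. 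As $\overline{u}^{\,p^n}$ is a $p$th power and $p \nmid j$, the hypothesis that $\overline{a'}$ is not a $p$th power forces $\overline{a}$ not to be a $p$th power in $\kappa_{K'}$, and a fortiori not in the subfield $\kappa_K$.

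Combined with $v_K(a) = 0$, this shows that the given generator $a$ already exhibits $L = K(a^{1/p^n})$ as an extension of $\mu_{p^n}$-type over $K$, completing the argument. I expect the only genuine subtlety to lie in the Kummer-theoretic comparison of the two generators — in particular, in justifying that they differ by a $p^n$th power times a $p$-prime power, rather than merely modulo $p$th powers — and in keeping straight that enlarging the residue field from $\kappa_K$ to $\kappa_{K'}$ can only create, never destroy, $p$th powers, so that the non-$p$th-power conclusion descends from $\kappa_{K'}$ to $\kappa_K$.
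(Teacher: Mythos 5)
Your proof is correct and follows essentially the same route as the paper's: the paper argues the contrapositive (if $\ol{a}$ is a $p$th power in $\kappa_K$, then any unit generator $b$ of $L'/K'$ satisfies $b = a^r c^{p^n}$ by Kummer theory and hence also reduces to a $p$th power), while you run the identical Kummer-theoretic comparison of generators in the forward direction; the valuation and residue-field bookkeeping is the same in both versions. (Your insistence that $p \nmid j$ is harmless but not actually needed for the reduction step.)
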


\begin{proof}  The ``if" direction is trivial.  For the ``only if" direction, suppose $L/K$ is not of $\mu_{p^n}$-type, let $F'/F$ be a finite extension, and let 
$L'/K' = (L \otimes_F F')/ (K \otimes_F F')$.  Assume $L'/K'$ is a field extension.
Then $a$ reduces to a $p$th power $\ol{a}$ in the residue field $\kappa_{K}$ of $K$, and $\ol{a}$ is also a $p$th power in the residue field
$\kappa_{K'}$ of $K'$.  Now, $L' = K'(\sqrt[p^n]{a})$.  If we can also write  
$L' = K'(\sqrt[p^n]{b})$, with $v_K'(b) = 0$, then Kummer theory shows that $b = a^r c^{p^n}$ for some $r \in \ints$ and $c \in K'$.  
This means that $b$ reduces to a $p$th power in $\kappa_{K'}$, so $L'/K'$ is not of $\mu_{p^n}$-type.
\end{proof}

\begin{lemma}\label{Lpimpliespn}
Let $L/K$ be a $\ints/p^n$-extension of characteristic $(0, p)$ complete DVFs with $n \geq 1$, and let $M/K$ be the unique $\ints/p$-subextension. 
Suppose $F \subseteq K$ is a complete DVF with algebraically closed residue field such that $K$ is unramified over $F$.  Then
$L/K$ is potentially of $\mu_{p^n}$-type with respect to $F$ if and only if $M/K$ is potentially of $\mu_p$-type with respect to $F$.  
\end{lemma}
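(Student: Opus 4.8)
The plan is to exploit the hypothesis that $\kappa_F$ is algebraically closed. Every finite extension $F'/F$ is then totally ramified, since its residue field is algebraic over $\kappa_F$ and hence equal to it; so for any such $F'$ the base change $K' := K \otimes_F F'$ is a complete DVF, totally ramified over $K$, with the \emph{same} residue field $\kappa_{K'} = \kappa_K$. In particular the condition ``$\bar a$ is not a $p$th power in the residue field'' is insensitive to base change along $F$. I will use repeatedly the following field-ness principle: the subextensions of the $\ints/p^n$-extension $L/K$ form a chain $K = L_0 \subset L_1 = M \subset \cdots \subset L_n = L$, and once $M$ has become fiercely ramified (e.g.\ of $\mu_p$-type) over the current base, the residue degree of every $L_i$ with $i \geq 1$ exceeds $1$, so no such $L_i$ can sit inside a totally ramified extension of that base; hence $L$ remains a field (and $\ints/p^n$-Galois) under all the totally ramified base changes below.

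The forward implication is immediate. If $L \otimes_F F' = K'(\sqrt[p^n]{a})$ is of $\mu_{p^n}$-type, with $a$ a unit whose reduction is not a $p$th power, then setting $\alpha = \sqrt[p^n]{a}$ the unique $\ints/p$-subextension is $K'(\alpha^{p^{n-1}}) = K'(\sqrt[p]{a})$, which is visibly of $\mu_p$-type; and this subextension is exactly $M \otimes_F F'$. Hence $M/K$ is potentially of $\mu_p$-type with respect to $F$.

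For the converse I proceed in two base-change stages. \emph{Stage A:} choose (by hypothesis) a finite $F_1/F$ making $M_1 := M \otimes_F F_1$ of $\mu_p$-type over $K_1$, and enlarge $F_1$ so that also $\zeta_{p^n} \in F_1$; the field-ness principle keeps $L_1 := L \otimes_F F_1$ a field, $\ints/p^n$-Galois over $K_1$, with $\ints/p$-subextension $M_1$ still of $\mu_p$-type. Since $\zeta_{p^n} \in K_1$, Kummer theory gives $L_1 = K_1(\sqrt[p^n]{b})$, and its $\ints/p$-subextension is $K_1(\sqrt[p]{b})$. As this coincides with the given $\mu_p$-type extension $K_1(\sqrt[p]{a})$ (with $a$ a unit), after replacing $b$ by a suitable prime-to-$p$ power times a $p^n$th power I may assume $b = a\,d^p$ for some $d \in K_1^\times$. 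Thus $w := v_{K_1}(b) = p\,v_{K_1}(d)$ is divisible by $p$. \emph{Stage B:} base change by a totally ramified $F_2/F_1$ of degree $p^{n-1}$ (e.g.\ adjoin a $p^{n-1}$st root of a uniformizer). Then $v_{K_2}(b) = p^{n-1} w \equiv 0 \pmod{p^n}$, so I may write $b = b_0\, c^{p^n}$ with $b_0 \in K_2^\times$ a unit and $c \in K_2^\times$, giving $L_2 = K_2(\sqrt[p^n]{b_0})$. Moreover $b_0 = a\,(d\,c^{-p^{n-1}})^p \equiv a \pmod{(K_2^\times)^p}$, so $\bar{b_0}$ is not a $p$th power. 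Hence $L_2/K_2$ is of $\mu_{p^n}$-type, which says precisely that $L/K$ is potentially of $\mu_{p^n}$-type with respect to $F$.

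The crux is the valuation bookkeeping in the converse: the natural Kummer generator $b$ of $L$ need not be a unit, so $L/K$ need not become $\mu_{p^n}$-type directly (Lemma \ref{Lmuptype} applies only to a unit generator). The two observations that resolve this are that the $\mu_p$-type of $M$ forces $p \mid v(b)$, and that a totally ramified base change of degree $p^{n-1}$ scales this valuation into $p^n\ints$, after which the ramified part of $L$ is absorbed into a $p^n$th power and $L$ acquires a unit Kummer generator. I expect the other point requiring care to be checking that every intermediate base change remains a field extension, which is exactly the linear-disjointness principle recorded in the first paragraph.
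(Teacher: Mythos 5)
Your proof is correct and follows essentially the same route as the paper's: pass to a totally ramified extension of $F$ containing $\zeta_{p^n}$ (over which $M$ stays of $\mu_p$-type and $L$ stays a field), use Kummer theory to arrange the generator $b$ of $L$ to be $a$ times a $p$th power, and then absorb the (necessarily $p$-divisible) valuation of $b$ into a $p^n$th power after a further totally ramified base change. The paper leaves the degree of that second base change and the field-ness checks implicit, but the underlying argument is identical to yours.
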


\begin{proof}
Throughout this proof, a base change of $K$ by a finite extension $F'/F$ (resp.\ $F''/F$, etc.) will be denoted $K'$ (resp.\ $K''$, etc.).  The
same holds for $L$ and $M$.

If $L/K$ is potentially of $\mu_{p^n}$-type with respect to $F$, then there is a finite extension $F'/F$  
such that $L' = K'(\sqrt[p^n]{a})$ with $v_{K'}(a) = 0$ and $a$ does not reduce to a $p$th
power in the residue field $\kappa_{K'}$ of $K'$.  Then $M'/K'$ is given by $M' = K'(\sqrt[p]{a})$, so
$M/K$ is potentially of $\mu_p$-type.  

Conversely, suppose a finite extension $F'/F$ yields $M'/K'$ given by
$M' = K'(\sqrt[p]{a})$, where $v_{K'}(a) = 0$ and $a$ does not reduce to a $p$th power in $\kappa_{K'}$.
Note that $K'$ is unramified over $F'$.  
If $F'' = F'(\zeta_{p^n})$, then $F''/F'$ is totally naively ramified (as $F'$ has algebraically closed residue field). 
So $K''/K'$ is also totally naively ramified, $v_K''(a) = 0$, and $a$ does not reduce to a $p$th power in the residue
field $\kappa_{K''} = \kappa_{K'}$ of $K''$.  Thus $M''/K''$ is given by $M'' = K''(\sqrt[p]{a})$, which is a nontrivial $\ints/p$-extension.
This means that $L''/K''$ is a $\ints/p^n$-extension given by
$L'' = K''(\sqrt[p^n]{b})$, where $b/a$ is a $p$th power in $K''$.  After a further totally naively ramified extension $F'''/F''$ (giving
a totally naively ramified extension $K'''/K''$), and possibly multiplying $b$ by a $p^n$th power in $K'''$, we may assume that $v_{K'''}(b) = 0$, 
and $a$ still does not reduce to a $p$th power in the residue field $\kappa_{K'''} = \kappa_{K'}$ of $K'''$.  
Since $b/a$ is a $p$th power in $K'''$, we have that $b$ does not reduce to 
a $p$th power in $\kappa_{K'''}$.  In particular, $L''' = K'''(\sqrt[p^n]{b})$, thus $L/K$ is potentially of multiplicative type.
\end{proof}

\begin{remark}\label{Ltossici}
One can give an alternate proof of Lemma \ref{Lpimpliespn} 
by using \cite[Lemma 3.2]{To:mup2} for the case $n=2$, and then induction for general $n$.
\end{remark}

\begin{prop}\label{Pval0}
Let $m > 1$ be prime to $p$, let $M/K$ be a $\ints/m$-extension of characteristic $(0, p)$ complete DVFs, 
and assume that $\zeta_p \in K$.
Let $a \in M^{\times} \backslash (M^{\times})^p$, and let $L = M(\sqrt[p]{a})$.  If
$L/K$ is $G$-Galois with $G$ nonabelian, then $p \, | \, v_M(a)$.
\end{prop}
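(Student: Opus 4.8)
The plan is to exploit the tower $K \subseteq M \subseteq L$ together with Kummer theory, using non-commutativity of $G$ to force a nontrivial Kummer exponent. First I would record the group-theoretic structure. Since $M/K$ is $\ints/m$-Galois it is in particular normal, so $H := \Gal(L/M) \cong \ints/p$ is a normal subgroup of $G$ with $G/H \cong \Gal(M/K) \cong \ints/m$; as $p \nmid m$, the extension splits and $G \cong \ints/p \rtimes \ints/m$. The hypothesis that $G$ is non-commutative is then exactly the statement that the conjugation action of $\ints/m$ on $H$ is nontrivial. I would fix a generator $\sigma$ of $\Gal(M/K)$, a lift $\tilde\sigma \in G$ of it (which exists because the restriction map $\Gal(L/K) \to \Gal(M/K)$ is surjective), and a generator $\tau$ of $H$ normalized so that $\tau(\alpha) = \zeta_p \alpha$, where $\alpha := \sqrt[p]{a}$ and $\zeta_p \in K$ is a primitive $p$th root of unity (note $\zeta_p \in M$ since $K \subseteq M$). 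Writing $\tilde\sigma \tau \tilde\sigma^{-1} = \tau^k$, non-commutativity means $k \not\equiv 1 \pmod p$.

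Next I would identify $L$ as a second Kummer extension. Since $\sigma$ is an automorphism of $M$, the element $\sigma(a)$ is again not a $p$th power in $M$, while $\tilde\sigma(\alpha)$ is a $p$th root of $\sigma(a)$ lying in $L$; comparing degrees gives $L = M(\sqrt[p]{\sigma(a)}) = M(\sqrt[p]{a})$. Kummer theory (valid as $M \ni \zeta_p$) then yields
$$\sigma(a) = a^r c^p$$
for some $c \in M^\times$ and some $r$ with $\gcd(r,p) = 1$. The crucial step, which I expect to carry the real content, is to pin down $r$ modulo $p$ in terms of $k$. Since $\tilde\sigma(\alpha)^p = \sigma(a) = (\alpha^r c)^p$, we may write $\tilde\sigma(\alpha) = \zeta_p^j \alpha^r c$ for some $j$. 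Evaluating the relation $\tilde\sigma\tau = \tau^k\tilde\sigma$ on $\alpha$, and using $\tilde\sigma(\zeta_p) = \sigma(\zeta_p) = \zeta_p$ together with $\tau$-invariance of $\zeta_p^j$ and $c$, I get $\zeta_p^{j+1} = \zeta_p^{j + kr}$, hence $kr \equiv 1 \pmod p$. In particular $r \equiv k^{-1} \not\equiv 1 \pmod p$, so $1 - r$ is a unit modulo $p$. This is where non-commutativity enters decisively: in the abelian case one would have $k \equiv 1$, forcing $r \equiv 1$, and the argument below would collapse.

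Finally I would pass to valuations. Because $\sigma$ is an automorphism of the complete DVF $M$ over $K$, it preserves the unique valuation $v_M$, so $v_M(\sigma(a)) = v_M(a)$. Applying $v_M$ to $\sigma(a) = a^r c^p$ gives $v_M(a) = r\, v_M(a) + p\, v_M(c)$, i.e. $(1 - r)\, v_M(a) = p\, v_M(c) \equiv 0 \pmod p$. Since $1 - r$ is invertible modulo $p$, this forces $p \mid v_M(a)$, as claimed. The only genuinely delicate point is the middle paragraph — translating the commutator constant $k$ into the Kummer exponent $r$ — so I would take care to normalize the generators $\tau$ and $\tilde\sigma$ consistently before running that computation.
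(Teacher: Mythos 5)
Your proof is correct and takes essentially the same route as the paper's: both obtain $\sigma(a) = a^r c^p$ from Kummer theory applied to the Galois-conjugate generator, extract $kr \equiv 1 \pmod{p}$ (hence $r \not\equiv 1 \pmod{p}$) from the commutation relation evaluated on $\sqrt[p]{a}$, and conclude $p \mid v_M(a)$ by applying $v_M$. The only cosmetic difference is that the paper fixes a prime-to-$p$ element of exact order $m$ via Schur--Zassenhaus where you take an arbitrary lift of a generator of $\Gal(M/K)$, which changes nothing.
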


\begin{proof}
By the Schur-Zassenhaus theorem, $G \cong \ints/p \rtimes \ints/m$, for some nontrivial action of $\ints/m$ on $\ints/p$.  Let $\sigma, c \in G$ be 
noncommuting elements of order $p$ and prime-to-$p$ respectively, such that $c \sigma  = \sigma^\nu c$, with $\nu \not \equiv 1 \pmod{p}$. Since 
$L/K$ is Galois, we know that $a$ and $c(a)$ both
yield the same Kummer extension, so $c(a) = a^dz^p$, where $z \in M$ and $p \nmid d$.  

Choose a $p$th root $\sqrt[p]{a}$ of $a$ in $L$, and let $\zeta$ be a $p$th root of unity such that $\sigma(\sqrt[p]{a}) = \zeta \sqrt[p]{a}$.  
Let $\alpha_{c}$ be such that $c(\sqrt[p]{a}) = \zeta^{\alpha_{c}}z\sqrt[p]{a}^d$.  Then 
$$c \sigma (\sqrt[p]{a}) = c(\zeta \sqrt[p]{a}) = \zeta c(\sqrt[p]{a}) = \zeta^{1+\alpha_{c}}z\sqrt[p]{a}^d,$$ whereas 
$$\sigma^\nu c(\sqrt[p]{a}) = \sigma^\nu (\zeta^{\alpha_{c}}z\sqrt[p]{a}^d) = \zeta^{\nu d+ \alpha_{c}} z \sqrt[p]{a}^d.$$
Thus $\nu d \equiv 1 \pmod{p}$. In particular, $d \not \equiv 1 \pmod{p}$.  Since $$v_M(a) \equiv v_M(c(a)) \equiv dv_M(a) \pmod{p},$$
we conclude that $p \, | \, v_M(a)$.
\end{proof}

\begin{lemma}\label{Llowramindex}
If $K$ is a characteristic $(0, p)$ complete DVF such that $e(K) < p-1$, then every ramified 
$\ints/p$-extension $L/K$ is naively ramified.
\end{lemma}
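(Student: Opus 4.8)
The plan is to argue by contradiction, eliminating the only alternative to naive ramification. Suppose $L/K$ is ramified but \emph{not} naively ramified; then it is weakly unramified, so a uniformizer of $K$ stays a uniformizer of $L$ and the relative ramification index is $e(L/K)=1$. Since $L/K$ is a finite separable extension of complete DVFs, the fundamental equality $[L:K]=e(L/K)\,f(L/K)$ holds (complete discrete valuation fields are defectless, even with imperfect residue field), forcing $f(L/K)=[\kappa_L:\kappa_K]=p$. As the extension is ramified, this residue extension is not separable, hence purely inseparable, so $L/K$ is fiercely ramified and $\kappa_L=\kappa_K(\bar y)$ with $\bar y^{\,p}=\bar x\in\kappa_K\setminus\kappa_K^{\,p}$. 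My goal is then to show that fierce ramification forces $e(K)\ge p-1$, contradicting the hypothesis $e(K)<p-1$.

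To carry this out I would lift $\bar y$ to a unit $y\in O_L^{\times}$ and let $g(T)=T^p+a_{p-1}T^{p-1}+\dots+a_0\in O_K[T]$ be its minimal polynomial over $K$. Reducing, $\bar g$ is monic of degree $p$ and annihilates $\bar y$, so $\bar g(T)=T^p-\bar x$; in particular $a_0\in O_K^{\times}$, $a_1,\dots,a_{p-1}\in\mathfrak{m}_K$, and $K(y)=L$. Normalizing $v_L$ so that it restricts to $v_K$ on $K$ (legitimate since $e(L/K)=1$), the heart of the argument is to bound $v_L(g'(y))$ from below and above.

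For the lower bound, write $g(T)=\prod_{j=0}^{p-1}(T-\sigma^j y)$ for a generator $\sigma$ of $\Gal(L/K)$ and evaluate the derivative at $T=y$ to get $g'(y)=\prod_{j=1}^{p-1}(y-\sigma^j y)$. A purely inseparable extension has only the trivial automorphism, so each $\sigma^j$ acts trivially on $\kappa_L$, whence $v_L(y-\sigma^j y)\ge 1$; summing over the $p-1$ nonzero $j$ gives $v_L(g'(y))\ge p-1$. For the upper bound, use instead $g'(y)=p\,y^{p-1}+\sum_{i=1}^{p-1} i\,a_i\,y^{i-1}$, in which the leading term has $v_L(p\,y^{p-1})=v_K(p)=e(K)$. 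The crucial point is that no cancellation can raise the valuation: after dividing by a uniformizer to the minimal coefficient-valuation and reducing mod $\mathfrak{m}_L$, the surviving terms are nonzero $\kappa_K$-multiples of \emph{distinct} elements of the $\kappa_K$-basis $\{1,\bar y,\dots,\bar y^{\,p-1}\}$ of $\kappa_L$, so their sum is nonzero. Hence $v_L(g'(y))$ equals the minimum of the coefficient valuations, which is at most $v_K(p)=e(K)$. Combining the two estimates yields $p-1\le v_L(g'(y))\le e(K)<p-1$, the desired contradiction.

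I expect the main obstacle to be precisely the upper bound $v_L(g'(y))\le e(K)$: a priori the terms of $g'(y)$ might conspire so that cancellation raises the valuation above $e(K)$, and the entire argument rests on the linear independence of the powers $\bar y^{\,i-1}$ in $\kappa_L$ (equivalently, that $y$ reduces to a generator of the purely inseparable residue extension) to rule this out. Secondary points to handle with care are the invocation of the no-defect equality $[L:K]=ef$ for complete DVFs with possibly imperfect residue field, and the triviality of the automorphism group of a purely inseparable extension, which underlies the lower bound $v_L(y-\sigma^j y)\ge 1$.
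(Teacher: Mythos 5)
Your proof is correct, and it takes a genuinely different route from the paper, which does not argue the lemma directly but simply cites \cite{Ra:sg} (Proposition 3.3.2 and Th\'{e}or\`{e}me 3.3.3), i.e.\ the classification of finite flat group schemes of order $p$ over a discrete valuation ring with absolute ramification index $e < p-1$. You replace that machinery with a self-contained valuation-theoretic computation: after correctly reducing to the only remaining alternative (a weakly unramified, fiercely ramified degree-$p$ extension, so $e(L/K)=1$, $f(L/K)=p$, and $\kappa_L/\kappa_K$ purely inseparable), you squeeze $v_L(g'(y))$ between $p-1$ from below --- via $g'(y)=\prod_{j=1}^{p-1}(y-\sigma^j y)$ and the fact that $\Gal(L/K)$ acts trivially on the purely inseparable residue extension, so each factor lies in $\mathfrak{m}_L$ --- and $e(K)$ from above, via the expansion of $g'(y)$ in $1,y,\dots,y^{p-1}$ together with the linear independence of $1,\bar y,\dots,\bar y^{\,p-1}$ over $\kappa_K$, which rules out the cancellation you rightly identify as the main danger. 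The auxiliary facts you flag (the defectless equality $[L:K]=e(L/K)f(L/K)$ for finite separable extensions of complete DVFs with possibly imperfect residue field, and the triviality of automorphisms of a purely inseparable extension) are standard and correctly invoked. What your approach buys is an elementary, reference-free proof readable with only the definitions of \S2; what the paper's citation buys is brevity and the placement of the lemma inside a more general group-scheme classification.
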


\begin{proof}
This is well known and follows, for example, from \cite[Proposition 3.3.2 and Th\'{e}or\`{e}me 3.3.3]{Ra:sg}.
\end{proof}

\begin{corollary}\label{Cgetalphap}
Let $m > 1$ be prime to $p$, and let $L/K$ be a $\ints/p \rtimes \ints/m$-extension of characteristic $(0, p)$ complete DVFs.  
Let $M$ be the intermediate field corresponding to the subgroup $\ints/p$.
Set $K'$ (resp.\  $L'$, $M'$) equal to $K(\zeta_p)$ (resp.\ $L(\zeta_p)$, $M(\zeta_p)).$  
Suppose that $e(M) < p-1$, and that $L'/K'$ is nonabelian.  
Then $L/M$ is not potentially of $\mu_p$-type with respect to any DVF $F \subseteq M$. 
\end{corollary}
 
\begin{proof}
By Lemma \ref{Llowramindex}, $L/M$ is naively ramified.
By Proposition \ref{Pval0} applied to $K' \subseteq M' \subseteq L'$,  we know that $L'/M'$ 
is given by extracting a $p$th root of some $a \in M'$ such that $p \, | \, v_{M'}(a)$.  After multiplying
by a $p$th power, we may assume $v_{M'}(a) = 0$.  Since $L/M$ is naively ramified and $M'/M$ is a prime-to-$p$ extension,
we have that $L'/M'$ is naively ramified, thus not of $\mu_p$-type.  By 
Lemma \ref{Lmuptype}, $L'/M'$ is not potentially of $\mu_p$-type with respect to any DVF $F' \subseteq M'$.
By definition, the same is true for $L/M$.
\end{proof} 

\begin{remark}\label{Rgetalphap}
In the situation of Corollary {\rm \ref{Cgetalphap}}, suppose further that the conjugation action of $\ints/m$ on $\ints/p$ is faithful.  One easily sees that
in order to check $L'/K'$ nonabelian, it suffices to have $M \not \subseteq K'$.
\end{remark}

\section{Stable reduction of covers}\label{Sstable}
Let $R$ be a mixed characteristic $(0, p)$ complete discrete valuation ring (DVR) with residue field $k$ and fraction field $K$.
We set $X \cong \proj^1_K$, and we fix a smooth model $X_R = \proj^1_R$ of $X$.  Let $G$ be a finite group with cyclic
$p$-Sylow group.
Let $f: Y \to X$ be a $G$-Galois cover defined over $K$, such that the branch points of $f$ are defined over $K$ and their specializations 
do not collide on the special fiber of $X_R$.  Assume that $f$ is branched at at least three points.
The cover $f$ may not have a smooth model over $R$, but it at least has a model over a finite extension of $R$ that is ``as close to
smooth as possible."
Specifically, by a theorem of Deligne and Mumford (\cite[Corollary 2.7]{DM:ir}), combined with work
of Raynaud (\cite[Appendice]{Ra:pg}, \cite[Proposition 2.4.5]{Ra:sp}) and Liu (\cite[Theorem 0.1]{Li:sr}), there is a
minimal finite extension $K^{st}/K$ with valuation ring $R^{st}$, and a unique model $f^{st}: Y^{st} \to X^{st}$ of $f_{K^{st}} := f \times_K K^{st}$
(called the \emph{stable model} of $f$) over $R^{st}$ such that

\begin{itemize}
\item The special fiber $\ol{Y}$ of $Y^{st}$ is semistable (i.e., has only ordinary double points for singularities). 
\item The ramification points of $f_{K^{st}}$ specialize to
\emph{distinct} smooth points of $\ol{Y}$.
\item $G$ acts on $Y^{st}$, and $X^{st} = Y^{st}/G$.
\item Any genus zero irreducible component of $\ol{Y}$ contains at least three
marked points (i.e., ramification points or points of intersection with the rest
of $\ol{Y}$).
\end{itemize}
If the last criterion is omitted, we say we have a \emph{semistable model} for $f$.
If we are working over a finite extension $K'/K^{st}$ with valuation ring $R'$, we will sometimes abuse
language and call $f^{st} \times_{R^{st}} R'$ the stable model of $f$.  This is justified because the special fiber of 
such a model is identical to that of the original stable model.

If $\ol{Y}$ is smooth, the cover $f: Y \to X$ is said to have \emph{potentially
good reduction}.   If $f$ does not have potentially good reduction, it is said to have \emph{bad reduction}.  In any case, the special fiber $\ol{f}:
\ol{Y} \to \ol{X}$ of the stable model is called the \emph{stable reduction} of $f$.  
One can view $X^{st}$ as a blowup of $X_R \times_R R^{st}$.  The strict transform of the special fiber of $X_R \times_R R^{st}$ in $X^{st}$ 
is called the \emph{original component}, and will be denoted $\ol{X}_0$.  

\subsection{Inertia Groups of the Stable Reduction}

The action of $G$ on $Y^{st}$ reduces to an action on the special fiber $\ol{Y}$.  By \cite[Lemme 6.3.3]{Ra:ab}, we know that
the inertia groups of the action of $G$ on $\ol{Y}$ at generic points of $\ol{Y}$ are $p$-groups.  
If $\ol{V}$ is an irreducible component of $\ol{Y}$, we will always write $I_{\ol{V}} \leq G$ for the inertia group of
the generic point of $\ol{V}$, and $D_{\ol{V}} \leq G$ for the decomposition group.

The inertia groups above a generic point of an irreducible component $\ol{W} \subset 
\ol{X}$ are conjugate cyclic $p$-groups.  If they have order
$p^i$, we call $\ol{W}$ a \emph{$p^i$-component}.  If $i = 0$, we call $\ol{W}$ an \emph{\'{e}tale
component}, and if $i > 0$, we call $\ol{W}$ an \emph{inseparable component}.  

As in \cite{Ra:sp}, we call an irreducible component $\ol{W} \subseteq \ol{X}$ a \emph{tail} if it is not the original component and intersects exactly 
one other irreducible component of $\ol{X}$.
Otherwise, it is called an \emph{interior component}.  
A tail of $\ol{X}$ is called \emph{primitive} if it contains the specialization of a branch
point of $f$.  Otherwise it is called \emph{new}.  An \'{e}tale component that is a tail is called an \emph{\'{e}tale tail}.

\begin{lemma}[\cite{Ob:vc}, Proposition 2.13] \label{Lcorrectspec}
If $x \in X$ is branched of prime-to-$p$ order, then $x$
specializes to an \'{e}tale component.
\end{lemma}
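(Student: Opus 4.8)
The plan is to prove the slightly sharper statement that the irreducible component of $\ol{Y}$ to which a ramification point of $f$ specializes has trivial generic inertia (is \'{e}tale); since the branch point below it specializes into the image of this component, the lemma follows immediately. So let $x$ be a branch point of branching index $n$ prime to $p$, and let $y$ be a ramification point of $f$ lying over $x$. Because $p \nmid n$, the ramification at $y$ is tame, so the stabilizer $G_y \leq G$ of $y$ (the inertia group of $f$ at $y$) is cyclic of order $n$; in particular $|G_y|$ is prime to $p$. To be safe I would work with a geometric point here, where decomposition and inertia coincide, so that this prime-to-$p$ bound on $|G_y|$ is unambiguous.

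First I would record the two properties of the stable model that drive the argument: that $y$ specializes to a smooth point $\bar{y}$ of $\ol{Y}$, lying on a single irreducible component $\ol{V}$, and that distinct ramification points of $f_{K^{st}}$ have distinct specializations. Write $\ol{W} = \ol{f}(\ol{V})$. Then the specialization $\bar{x}$ of $x$ equals $\ol{f}(\bar{y})$ and so lies on $\ol{W}$, and since the inertia groups above the generic point of $\ol{W}$ are all conjugate to $I_{\ol{V}}$, it suffices to show that $I_{\ol{V}} = 1$, i.e.\ that $\ol{W}$ is an \'{e}tale component.

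Suppose instead that $I_{\ol{V}} \neq 1$. By \cite[Lemme 6.3.3]{Ra:ab} the group $I_{\ol{V}}$ is a $p$-group, so I may choose a nontrivial $p$-element $g \in I_{\ol{V}}$. As $g$ lies in the inertia group at the generic point of $\ol{V}$, it acts trivially on $k(\ol{V})$ and hence fixes $\ol{V}$ pointwise; in particular $g(\bar{y}) = \bar{y}$. On the other hand $g \in G$ acts on the generic fiber $Y$ commuting with $f$, so $g(y)$ is again a ramification point of $f$ over $x$, and its specialization is $g(\bar{y}) = \bar{y}$. Since distinct ramification points specialize to distinct points, $g(y) = y$, i.e.\ $g \in G_y$. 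But $|G_y|$ is prime to $p$, which contradicts the fact that $g$ is a nontrivial $p$-element. Hence $I_{\ol{V}} = 1$, so $\ol{V}$, and therefore $\ol{W}$, is \'{e}tale, and $x$ specializes to the \'{e}tale component $\ol{W}$.

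The only genuinely nontrivial point is the passage from the special fiber to the generic fiber in the last paragraph: an element fixing the specialization $\bar{y}$ does not obviously fix $y$ itself. This is precisely where the stable-model property that distinct ramification points have distinct smooth specializations is essential, and it is the step I would justify most carefully. Everything else is formal once tameness at $x$ is used to force $G_y$ to have prime-to-$p$ order, and once one recalls that the generic inertia of the components of $\ol{Y}$ is always a $p$-group.
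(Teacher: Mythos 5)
The paper does not prove this lemma; it is imported verbatim from \cite{Ob:vc}, Proposition 2.13, so there is no in-paper proof to compare against. Your argument is correct and is essentially the standard one behind that citation: the stable-model requirement that ramification points specialize to \emph{distinct} smooth points forces every $g\in G$ fixing $\bar{y}$ to fix $y$ itself, so the stabilizer of $\bar{y}$ equals the prime-to-$p$ group $G_y$, and since $I_{\ol{V}}$ is a $p$-group fixing $\ol{V}$ pointwise it must be trivial.
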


\begin{lemma}[\cite{Ra:sp}, Proposition 2.4.8]\label{Letaletail}
If $f$ has bad reduction and $\ol{W}$ is an \'{e}tale component of $\ol{X}$, then $\ol{W}$ is a tail.  In particular, the original component
is an inseparable component.
\end{lemma}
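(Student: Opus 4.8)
The plan is to exploit the tree structure of $\ol{X}$ (the special fiber of a semistable model of $\proj^1$ is a tree of projective lines) together with the dichotomy between \'etale ($i=0$) and inseparable ($i>0$) components. I would split the lemma into two assertions: (I) every \'etale component of $\ol{X}$ meets exactly one other component, and (II) the original component $\ol{X}_0$ is inseparable. Note that (II) is \emph{not} a free consequence of the main clause: ``$\ol{W}$ \'etale $\Rightarrow \ol{W}$ a tail'' already incorporates ``$\ol{W}$ is not the original component,'' which is the contrapositive of (II). Granting (I) and (II), an \'etale component has degree one in the dual tree and is distinct from $\ol{X}_0$, hence is a tail, and (II) is exactly the final clause.

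The engine is a local analysis at the nodes. Fix a node $\ol{x} = \ol{W} \cap \ol{W}'$ of $\ol{X}$ and a node $\ol{y}$ of $\ol{Y}$ above it, with branches on components $\ol{V}$ (over $\ol{W}$) and $\ol{V}'$ (over $\ol{W}'$); by Raynaud the inertia groups $I_{\ol{V}}, I_{\ol{V}'}$ are cyclic $p$-groups. Studying the cover of the formal annulus of $X^{st}$ joining $\ol{W}$ and $\ol{W}'$, I would establish how these two inertia groups are matched across $\ol{x}$ and track the associated deformation datum (a logarithmic differential on each inseparable component, itself a copy of $\proj^1$). The two facts I need are: first, if $\ol{W}$ is \'etale ($I_{\ol{V}} = 1$) then the neighbor $\ol{W}'$ must be inseparable and its differential acquires a pole at $\ol{x}$ (so two \'etale components are never adjacent); and second, that such a node contributes a definite positive amount to the numerical invariant of the inseparable neighbor.

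To conclude (I), I would feed these contributions into the global balance on each inseparable component --- the total degree of a differential on $\proj^1$ is $-2$, which is the content packaged by the vanishing cycles formula (Theorem~\ref{Tvancycles}) --- and combine it with the tree structure to force each \'etale component to attach to the inseparable locus at only one node, i.e.\ to have degree one. For (II), I would argue that the inseparable locus is nonempty (since $f$ has bad reduction, $\ol{Y}$ is singular and $\ol{X}$ has more than one component) and that it contains $\ol{X}_0$: all branch points of $f$ specialize to distinct points of $\ol{X}_0$, and an \'etale original component carrying all of them would yield a tame separable reduction forcing good reduction, contrary to hypothesis (here Lemma~\ref{Lcorrectspec} locates the prime-to-$p$ branch specializations on \'etale components). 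The main obstacle is the local node analysis of the second paragraph: pinning down the matching of inertia/deformation data across a node and the sign (pole versus zero) of its contribution is the technical heart of the argument, and is exactly where the deformation-data machinery the paper otherwise suppresses is genuinely needed.
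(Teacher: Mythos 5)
First, a point of reference: the paper does not prove this lemma at all --- it is quoted directly from Raynaud (\cite[Proposition 2.4.8]{Ra:sp}), so the only meaningful comparison is with Raynaud's argument, not with anything in this text. Your outline does point at the right machinery (deformation data on the inseparable components, the degree count for a logarithmic differential on $\proj^1$, the tree structure of $\ol{X}$, the special role of the branch-point specializations), and that is indeed the engine of Raynaud's proof. But as a proof the proposal has genuine gaps rather than suppressed routine detail. The two local facts you isolate --- that a neighbour of an \'{e}tale component must be inseparable, and that such a node forces a pole of the neighbour's differential --- constitute essentially the whole content of the local theory of the formal annuli at the nodes. You correctly identify them as ``the technical heart'' and then leave them unproved, so what you have is a map of where a proof would live, not a proof; and it is not clear the first fact is even purely local, since in Raynaud's treatment the impossibility of adjacent \'{e}tale components falls out of the global argument.

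Two of the steps you do spell out are also shaky. First, the vanishing cycles formula as stated in Theorem~\ref{Tvancycles} is a sum over the \'{e}tale \emph{tails} and is silent about a hypothetical interior \'{e}tale component, so it cannot be invoked to rule one out; what is needed is the node-by-node bookkeeping of zeros and poles on each individual inseparable component and the telescoping sum over the tree, which is exactly the portion of \cite[\S\S 2.4--3.4]{Ra:sp} being black-boxed (note also that Riemann--Hurwitz applied to the separable cover of an interior \'{e}tale component bounds the conductors at its nodes from \emph{below}, not above, so the degree count does not obviously close without that global telescoping). Second, your argument that $\ol{X}_0$ is inseparable misplaces the branch points: they specialize to distinct points of the special fiber of the \emph{smooth} model $X_R$, but on the stable model they generically specialize to tails hanging off $\ol{X}_0$ rather than to $\ol{X}_0$ itself (compare Lemma~\ref{Lcorrectspec} with the conclusion of the present lemma), so ``an \'{e}tale original component carrying all the branch points'' is not the configuration you need to exclude; and in any case ``separable above $\ol{X}_0$ implies good reduction'' is not immediate once $\ol{X}$ has several components, since wild ramification can be concentrated at the nodes.
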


\begin{remark}\label{Rmultred}
We will see in \S\ref{Smultred} that, even for fixed $i$, 
not all $p^i$-components are created equal!  Indeed, one wishes to keep track of some of the information that is lost when reducing
to characteristic $p$.  For a reasonably full picture of what gets lost, the standard technique is to use ``deformation data" as developed, 
e.g., in \cite[\S1]{He:ht}, \cite[\S1.3]{We:mc}, and \cite[Construction 2.4]{Ob:vc}.  However, in this paper, we will only need to keep track of
whether certain inseparable components are ``multiplicative" or not, see \S\ref{Smultred}.  Our proof of Theorem \ref{Tmain} is based on
showing that if $f$ has bad reduction, then under the assumptions of Theorem \ref{Tmain},
the original component must be both multiplicative and not multiplicative, a contradiction.
In \S\ref{Stame}, we give an explicit sufficient criterion for the original component to be multiplicative, when $G \cong \ints/p^s \rtimes \ints/m$. 
\end{remark}

\subsection{Multiplicative reduction}\label{Smultred}
Maintain the notation previously introduced in \S\ref{Sstable}. 
Suppose that $f: Y \to X = \proj^1$ has bad reduction, and let $f^{ss}: Y^{ss} \to X^{ss}$ be a semistable model for $f$, defined over $R$, with 
special fiber $\ol{f}^{ss}: \ol{Y}^{ss} \to \ol{X}^{ss}$.  
Let $\ol{V}$ be an irreducible component of $\ol{Y}^{ss}$ above an inseparable component $\ol{W}$ of $\ol{X}^{ss}$, let $I_{\ol{V}}$ 
be its inertia group, and let $D_{\ol{V}}$ be its decomposition group.  Since $I_{\ol{V}}$ is normal in $D_{\ol{V}}$, it follows that 
$D_{\ol{V}}$ has a normal subgroup of order $p$.  Thus, by \cite[Corollary 2.4]{Ob:vc}, the maximal normal prime-to-$p$
subgroup $N$ of $D_{\ol{V}}$ is such
that $D_{\ol{V}}/N$ has normal $p$-Sylow subgroup $P$ of some order $p^j$.

If $\eta$ is the generic point of $\ol{V}/N$, then $P$ acts on the complete DVR $B := \hat{\mc{O}}_{Y_R/N, \eta}$.  Let
$C$ be the fixed ring $B^P$.  Since $P$ acts trivially on the residue field $k(\ol{V}/N)$ of $B$, we have that $B/C$ is a totally
ramified $P$-extension of complete DVRs.  If $B/C$ is potentially of $\mu_{p^j}$-type with respect to $\Frac(R)$, 
then the model $f^{ss}$ is said to have \emph{multiplicative reduction above $\ol{W}$} (in the language of Remark \ref{Rmultred}, the component
$\ol{W}$ is multiplicative).
As mentioned at the beginning of \S\ref{Sval}, this means that $B/C$ is a $\mu_{p^j}$-torsor.

\subsection{Vanishing cycles formula}\label{Svanishing}
The original version of the \emph{vanishing cycles formula} below is due to Raynaud in \cite{Ra:sp}.  The generalized version below is a 
special case of \cite[Theorem 3.14]{Ob:vc}, which will be vital for Proposition \ref{Pmulttype}.  The idea is to relate the genera and
ramification behavior of certain irreducible components of the stable reduction of $f$ to the ramification behavior of $f$ itself.
First we need a definition.  Maintain the notation
from the beginning of \S\ref{Sstable}.

\begin{defn}[(cf. \cite{Ob:fm1}, Definition 4.10)]\label{Draminvariant}
Consider an \'{e}tale tail $\ol{X}_b$ of $\ol{X}$.  Suppose $\ol{X}_b$ intersects the rest of $\ol{X}$ at $\ol{x}_b$.
Let $\ol{Y}_b$ be a component of $\ol{Y}$ lying above $\ol{X}_b$, and let $\ol{y}_b$ be a point
lying above $\ol{x}_b$.  Then the \emph{effective ramification invariant} $\sigma_b$ of $\ol{X}_b$ is the conductor of higher ramification
for the extension $\hat{\mc{O}}_{\ol{Y}_b, \ol{y}_b}/\hat{\mc{O}}_{\ol{X}_b, \ol{x}_b}$ of complete DVRs.  That is, 
$$\sigma_b = \sup_{i \in \rats_{\geq 0}}(H^i \neq id),$$
where $H$ is the Galois group of $\hat{\mc{O}}_{\ol{Y}_b, \ol{y}_b}/\hat{\mc{O}}_{\ol{X}_b, \ol{x}_b}$ and $H^i$ is the 
filtration for the \emph{upper numbering} (see \cite[IV \S3]{Se:lf}).
We also set $m_b$ equal to the prime-to-$p$ part of $|H|$.  
\end{defn}

\begin{lemma}[(\cite{Ob:vc}, Lemma 2.20, Lemma 4.2)]\label{Lramdenominator}
The effective ramification invariants $\sigma_b$ are positive and lie in $\frac{1}{m_G}\ints$.  Furthermore, if $\ol{X}_b$ is a new tail,
then $\sigma_b \geq 1 + 1/m_G$. 
\end{lemma}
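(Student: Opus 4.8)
The plan is to study the local Galois group $H = \Gal(\hat{\mc{O}}_{\ol{Y}_b, \ol{y}_b}/\hat{\mc{O}}_{\ol{X}_b, \ol{x}_b})$ directly and to identify its upper-numbering conductor $\sigma_b$ with an invariant of a deformation datum attached to the inseparable component adjacent to $\ol{X}_b$. First I would pin down the structure of $H$. Both residue fields are $k$, so the extension is totally ramified and $H$ coincides with its own inertia group. Since $\ol{X}_b$ is an \'etale tail whereas the component $\ol{W}$ meeting it at $\ol{x}_b$ is inseparable (its generic inertia is $\ints/p^i$ with $i\geq 1$), the ramification of $\ol{Y}_b \to \ol{X}_b$ at $\ol{x}_b$ has nontrivial $p$-part. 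Hence $H \cong P_b \rtimes \ints/m_b$ with $P_b$ a nontrivial cyclic $p$-group (a subgroup of the $p$-Sylow of $G$) and $m_b$ prime to $p$, and $H$ is wildly ramified. Positivity of $\sigma_b$ is then immediate: the top break in the upper numbering (see \cite[IV \S3]{Se:lf}) of any wildly ramified extension is strictly positive.

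For the denominator and the new-tail bound I would pass to deformation data. A first, cruder observation comes from Herbrand's formula: writing $\lambda$ for the top break of $H$ in the lower numbering, the tame quotient $\ints/m_b$ contributes the factor $[H:P_b]=m_b$ to $\phi_H$, so $\sigma_b = \phi_H(\lambda)$ has the shape $\lambda/m_b$ (with corrections coming from the intermediate drops of the filtration when $P_b=\ints/p^i$, $i>1$). This already explains the factor $1/m_b$, but a priori those intermediate drops could introduce powers of $p$ into the denominator; excluding them is the real point. To do so I would use that the extensions arising here are not arbitrary $\ints/p^i\rtimes\ints/m_b$-extensions but come from the stable reduction, and carry a \emph{deformation datum}: a logarithmic differential form $\omega$ living on a tame $\ints/m_b$-cover $\ol{Z}_W \to \ol{W}$ (with $m_b \mid m_G$), transforming by a character under the $\ints/m_b$-action. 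The local compatibility between $\omega$ and the ramification of $\ol{Y}_b \to \ol{X}_b$ yields the key identity $\sigma_b = \ord_{\ol{x}_b}(\omega) + 1$, where $\ord_{\ol{x}_b}$ is the order computed on $\ol{W}$, i.e.\ the order of $\omega$ on $\ol{Z}_W$ divided by the ramification index of $\ol{Z}_W\to\ol{W}$ at $\ol{x}_b$. Since this index divides $m_b \mid m_G$, the order lies in $\tfrac{1}{m_b}\ints \subseteq \tfrac{1}{m_G}\ints$, and therefore $\sigma_b \in \tfrac{1}{m_G}\ints$.

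It remains to prove the new-tail inequality. The point is that poles of $\omega$ occur only at branch points and at intersections with other inseparable components, so at the attaching point of an \'etale tail the order is $\geq 0$; for a \emph{new} tail I would argue that the order is in fact strictly positive, via the stability/minimality condition defining the stable model: if $\ord_{\ol{x}_b}(\omega)=0$ then $\ol{X}_b$ would carry no new geometric information and could be contracted, contradicting its presence as a tail. Combined with the denominator result, $\ord_{\ol{x}_b}(\omega)$ is then a positive element of $\tfrac{1}{m_G}\ints$, hence $\geq \tfrac{1}{m_G}$, giving $\sigma_b \geq 1 + \tfrac{1}{m_G}$. The main obstacle throughout is the second paragraph: establishing the order formula $\sigma_b = \ord_{\ol{x}_b}(\omega)+1$ with the correct (prime-to-$p$) denominator $m_G$, since this is exactly where the special geometric origin of the extension — rather than its abstract group theory — must be exploited, and it is the step that genuinely requires the machinery of \cite{We:br} and \cite{Ob:vc}.
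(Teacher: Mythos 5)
First, a remark on the comparison itself: the paper does not prove this lemma at all --- it is imported verbatim from \cite[Lemmas 2.20 and 4.2]{Ob:vc} --- so there is no in-paper argument to measure you against, and your sketch has to stand on its own. Its overall shape is reasonable (positivity from wildness at $\ol{y}_b$, which you justify correctly via the adjacent inseparable component; denominator control from the tame-by-cyclic structure; stability for the new-tail bound), but two steps are genuinely problematic. The most concrete one: combining your order formula $\sigma_b = \ord_{\ol{x}_b}(\omega)+1$ with your claim that $\omega$ has no pole at the attaching point of an \'{e}tale tail would force $\sigma_b \geq 1$ for \emph{every} \'{e}tale tail, primitive ones included. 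That is inconsistent with the vanishing cycles formula (Theorem \ref{Tvancycles}): for a three-point cover with three primitive tails one has $\sum_{\text{prim}}\sigma_b + \sum_{\text{new}}(\sigma_b-1) = 1$, and Proposition \ref{Pinvarianteq} produces primitive tails with $\sigma_b = a_i/m < 1$. The poles of the deformation datum sit exactly at the points of the inseparable components toward which branch points specialize, i.e.\ at the attaching nodes of the \emph{primitive} tails, so either your formula or your pole count is wrong; as stated they cannot both hold.

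Second, for the denominator you correctly flag that the intermediate breaks of the filtration (when $P_b = \ints/p^i$, $i>1$) could a priori put powers of $p$ into the denominator, but you then defer the resolution to the unproved order formula, calling it ``the real point.'' A proof cannot leave the real point open. The standard resolution is purely local and needs no deformation data: by \cite[Theorem 1.1]{OP:wt}, all upper jumps of a wildly ramified $\ints/p^i \rtimes \ints/m_b$-extension of $k((t))$ share the same fractional part, and the first jump equals $j_1/m_b$ with $j_1 \in \ints$, so every jump lies in $\frac{1}{m_b}\ints \subseteq \frac{1}{m_G}\ints$ (the paper itself invokes exactly this in the proof of Proposition \ref{Pinvarianteq}). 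Finally, for the new-tail inequality, ``carries no new geometric information and could be contracted'' is a heuristic, not an argument: the actual mechanism is Riemann--Hurwitz on the tail. A new tail has no branch point specializing to it, so $\ol{Y}_b \to \ol{X}_b \cong \proj^1$ is branched only at $\ol{x}_b$; if $\sigma_b \leq 1$, the Riemann--Hurwitz computation forces the components of $\ol{Y}_b$ over $\ol{X}_b$ to have genus $0$ with too few marked points, contradicting the stability condition on the stable model. Hence $\sigma_b > 1$, and only then does $\sigma_b \in \frac{1}{m_G}\ints$ upgrade this to $\sigma_b \geq 1 + \frac{1}{m_G}$.
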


\begin{theorem}[(Vanishing cycles formula, \cite{Ob:vc}, Theorem 3.14, Corollary 3.15)]\label{Tvancycles}
Let $f: Y \to X \cong \proj^1$ be a $G$-Galois cover with bad reduction branched at $r$ points as in this section, where $G$ has a cyclic
$p$-Sylow subgroup.  Let $B_{\text{new}}$ be an indexing set for the new \'{e}tale tails and let
$B_{\text{prim}}$ be an indexing set for the primitive \'{e}tale tails.  Let $\sigma_b$ be the effective ramification invariant in Definition 
\ref{Draminvariant}.
Then we have the formula 

\begin{equation}\label{Evancycles} 
r-2 = \sum_{b \in B_{\text{new}}} (\sigma_b - 1) + \sum_{b \in B_{\text{prim}}} \sigma_b.
\end{equation}
\end{theorem}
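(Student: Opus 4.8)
The left-hand side $r-2$ is minus the Euler characteristic of $\proj^1$ with the $r$ branch points removed, which suggests deriving (\ref{Evancycles}) as a global Euler-characteristic identity assembled from local canonical-degree computations over the tree of components of $\ol{X}$. First I would record the combinatorial skeleton: since $X\cong\proj^1$, the dual graph of $\ol{X}$ is a tree rooted at the original component $\ol{X}_0$; by Lemma \ref{Letaletail} the \'{e}tale components are exactly the tails, while $\ol{X}_0$ and every interior component are inseparable; and by Lemma \ref{Lcorrectspec} every branch point specializes to a primitive \'{e}tale tail. Thus $B_{\text{prim}}\sqcup B_{\text{new}}$ indexes all tails, and the right-hand side of (\ref{Evancycles}) is a sum of one local term per tail.

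To each inseparable $p^i$-component $\ol{W}$ I would attach a deformation datum, following Henrio, Wewers and Raynaud (\cite{He:ht}, \cite{We:br}, \cite{Ra:sp}): the multiplicative reduction of a $\ints/p$-subquotient of $I_{\ol{W}}$ produces a nonzero meromorphic differential $\omega_{\ol{W}}$ on a component of $\ol{Y}$ over $\ol{W}$, transforming by a fixed character under the prime-to-$p$ part of $D_{\ol{W}}$, logarithmic away from the marked points of $\ol{W}$, and carrying an exponent $h_x\in\frac{1}{m_G}\ints$ at each marked point $x$ governed by the break of the ramification filtration there. Since $\ol{W}\cong\proj^1$, the degree of its canonical bundle yields the local identity $\sum_{x}(h_x-1)=-2$, the sum running over the marked points of $\ol{W}$.

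I would then sum these identities over all inseparable components, which form a connected subtree of the dual graph. At a node joining two inseparable components the two attached exponents are opposite, by the matching of residues (equivalently, of upper-numbering breaks) across the node, cf. \cite[\S3]{Ob:vc}, so that summing the per-component identities telescopes along the subtree. At a node where an inseparable component meets an \'{e}tale tail $\ol{X}_b$, only the inseparable side carries a datum, and by the dictionary between the exponent of the deformation datum and the conductor in the upper numbering of $\hat{\mc{O}}_{\ol{Y}_b,\ol{y}_b}/\hat{\mc{O}}_{\ol{X}_b,\ol{x}_b}$ that exponent is exactly $\sigma_b$ of Definition \ref{Draminvariant}. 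The telescoping then collapses everything to the single identity $\sum_{b}(\sigma_b-1)=-2$, summed over all tails. Finally, since the $r$ branch points have distinct specializations (\S\ref{Sstable}), each lies on its own primitive tail, so $|B_{\text{prim}}|=r$; splitting the last identity into its primitive and new parts and adding $r$ to both sides yields (\ref{Evancycles}).

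The main obstacle is the passage from Raynaud's case $|P|=p$ to a cyclic $p$-Sylow subgroup $P$ of arbitrary order $p^i$. There a single differential no longer records all of the ramification: one must instead propagate a compatible system of deformation data up the $\ints/p^i$-tower, organized as a Hurwitz tree, and show that the breaks at successive levels accumulate so that the top conductor $\sigma_b$ is precisely the exponent entering the telescoping sum. Making node compatibility precise at every level of the tower, and verifying that the resulting bookkeeping is consistent with the denominator constraint $\sigma_b\in\frac{1}{m_G}\ints$ and the lower bound $\sigma_b\geq 1+\frac{1}{m_G}$ for new tails (Lemma \ref{Lramdenominator}), is the analytic heart of the argument, and is exactly what is carried out in \cite{Ob:vc}.
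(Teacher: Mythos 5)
The first thing to say is that the paper does not prove Theorem \ref{Tvancycles} at all: it is imported verbatim as a special case of \cite[Theorem 3.14, Corollary 3.15]{Ob:vc}, and the introduction states explicitly that deformation data are deliberately black-boxed so that the reader need only ``accept the vanishing cycles formula.'' So there is no in-paper proof to compare yours against; what you have written is a sketch of the proof in the cited reference (itself a generalization of Raynaud's argument in \cite{Ra:sp}), and your final paragraph concedes that the analytic heart --- propagating compatible deformation data up the $\ints/p^n$-tower and controlling the breaks at each level --- ``is exactly what is carried out in \cite{Ob:vc}.'' That is the same move the paper makes, just with more narration; as a self-contained proof it is incomplete precisely where the work is.

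Two points in the sketch deserve flags beyond that deferral. First, the claim that at a node joining two inseparable components ``the two attached exponents are opposite'' is literally correct only for multiplicative deformation data; interior components can carry additive data, and the compatibility condition at such nodes (and the bookkeeping when several deformation data live over one component, one for each level of the $p$-filtration) is more delicate than a clean residue cancellation --- this is where the telescoping argument actually requires the machinery of \cite{Ob:vc} rather than a degree count on $\omega_{\ol{W}}$. Second, your step $|B_{\text{prim}}|=r$ uses Lemma \ref{Lcorrectspec}, which applies only to branch points of prime-to-$p$ index; the theorem as stated does not impose prime-to-$p$ branching, and a wildly branched point specializes to an inseparable component, not a tail, so the formula in the generality stated has to be checked against the full statement of \cite[Theorem 3.14]{Ob:vc}. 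In the paper's actual applications (Proposition \ref{Pmulttype} via \cite[Lemme 4.2.13]{Ra:sp}) all branching is prime to $p$, so this is harmless there, but your derivation should state the hypothesis it is using.
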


\subsection{Covers of multiplicative type}\label{Stame}
Maintain the notation of this section, and assume for the duration of \S\ref{Stame} that $G = \ints/p^s \rtimes \ints/m$, where $m$ is prime to $p$,
the action of $\ints/m$ on $\ints/p^s$ is faithful, and $s \geq 1$.  Note that $m_G = m$.  Fix elements
$c \in G$ of order $m$ and $\sigma \in G$ of order $p$.  Then we can define a character $\ol{\chi}: \ints/m \to \FF_p^{\times}$ such that
$c^i \sigma c^{-i} = \sigma^{\ol{\chi}(i)}$ for all $i \in \ints/m$.  By \cite[Lemma 2.1]{Ob:vc}, the character $\ol{\chi}$ is injective; in particular, 
$m | p-1$.  We lift $\ol{\chi}$ to a character $\chi: \ints/m \to K$ such that $\chi(i)$ is the unique $m$th root of unity whose reduction is $\ol{\chi}(i)$.

Let $f: Y \to X = \proj^1$ be a $G$-cover defined over $K$ branched at $r$ $K$-points $x_1, \ldots, x_r$, with $r \geq 3$.  Assume that the
branching indices are all prime to $p$.  For notational purposes, choose a coordinate for $X$ such that $x_i \neq \infty$ for $1 \leq i \leq r$.  
Throughout \S\ref{Stame}, we \emph{allow} the specializations of the branch points to collide on the special fiber of $X_R$.  
There is then a unique stable model $\ol{X}^{adm}$ of $X_R$ separating the specializations of the branch points, such that each irreducible
component of the special fiber $\ol{X}^{adm}$ of $\ol{X}$ contains at least three marked points (i.e., branch points and singular points of $\ol{X}^{adm}$).
In the case of a three-point cover, $\ol{X}^{adm}$ is smooth.

The cover $f$ is the composition of a $\ints/m$-cover $g: Z \to X$ given (birationally) by an equation of the form

\begin{equation}\label{Emulttype}
z^m = \prod_{i=1}^r (x - x_i)^{a_i}
\end{equation}
with an \'{e}tale cover $Y \to Z$ of degree $p^n$. Here $z$ is chosen so that $c^*z = \chi(1)z$.
We may assume that, for $1 \leq i \leq r$, we have $0 \leq a_i \leq m$.    Since $g$ is unramified at $\infty$, we have
$m | \sum_{i=1}^r a_i$.  If $\sum_{i=1}^r a_i = m$, then $f$ is said to be of \emph{multiplicative type} (cf. \cite[\S1]{We:mc}).

Let $f^{ss}: Y^{ss} \to X^{ss}$ be a semistable model for $f$ with special fiber $\ol{f}^{ss}: \ol{Y}^{ss} \to \ol{X}^{ss}$.  
Since a semistable model must separate the specializations of branch points,  
we have $\ol{X}^{ss} \supseteq \ol{X}^{adm}$.  If $\ol{X}_b$ is an \'{e}tale tail of $\ol{X}^{ss}$ that intersects an inseparable component
(not automatic, as $f^{ss}$ is not necessarily the stable model),
then one can define $\sigma_b$ and $m_b$ for $\ol{X}_b$ as in Definition \ref{Draminvariant}.

The following proposition is analogous to \cite[Proposition 1.8 (i)]{We:mc}.
\begin{prop}\label{Pinvarianteq}
Given $f^{ss}$ as above, suppose $\ol{X}_b$ is an \'{e}tale tail of $\ol{X}^{ss}$ containing the specialization $\ol{x}_i$ of $x_i$ and no other 
specializations of branch points.  Suppose further that $\ol{X}_b$ intersects an inseparable component of $\ol{X}^{ss}$.
Then $$\langle \sigma_b \rangle = \frac{a_i}{m},$$ where $\langle \cdot \rangle$ means the fractional part.
\end{prop}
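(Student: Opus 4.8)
The plan is to localize the computation at the point $\ol{x}_b$ where $\ol{X}_b$ meets the inseparable component and to read the fractional part of $\sigma_b$ off the tame $\ints/m$-subcover, every other contribution to $\sigma_b$ being integral once the uniformizers are scaled correctly. Write $\ol{x}_i$ and $\ol{x}_b$ for the two relevant points of $\ol{X}_b \cong \proj^1$ and choose a coordinate $t$ with $\ol{x}_i = 0$ and $\ol{x}_b = \infty$; by Lemma \ref{Lcorrectspec} the prime-to-$p$ branch point $x_i$ really does specialize to the \'{e}tale tail, consistently with the hypothesis. The local Galois group $H = \Gal(\hat{\mc{O}}_{\ol{Y}_b, \ol{y}_b}/\hat{\mc{O}}_{\ol{X}_b, \ol{x}_b})$ of Definition \ref{Draminvariant} is totally ramified of the form $\ints/p^i \rtimes \ints/m_b$, with normal wild part $\ints/p^i$ (nontrivial precisely because $\ol{X}_b$ abuts an inseparable component) and tame quotient $\ints/m_b$.

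First I would reduce the Kummer subcover \eqref{Emulttype} to the tail. Since $x_i$ is the only branch point specializing to $\ol{X}_b$, every factor $(x - x_j)$ with $j \neq i$ reduces to a unit on $\ol{X}_b$, so after scaling $z$ by an appropriate power of a uniformizer the reduced equation becomes $\ol{z}^{\,m} = (\text{unit})\cdot t^{a_i}$. Hence the tame subcover $\ol{Z}_b \to \ol{X}_b$ is ramified over $\ol{x}_b = \infty$ of index $m_b = m/\gcd(m, a_i)$, and---this is the point---the generator $c$ of $\ints/m$ (acting by $c^{*}z = \chi(1)z$) acts on $\ol{z} \sim t^{a_i/m}$ near $\ol{x}_b$ through a definite character whose exponent is congruent to $a_i$ modulo $m$. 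Thus the tame ramification at $\ol{x}_b$ carries the ``signature'' $a_i/m \pmod 1$.

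Next I would bring in the wild part. The multiplicative-type structure means the wild $\ints/p^i$-cover $\ol{Y}_b \to \ol{Z}_b$ is given near $\ol{y}_b$ by an Artin--Schreier--Witt class represented by a function $h$ with a pole at $\ol{y}_b$, and $h$ is an eigenvector for the residual $\ints/m_b$-action with character fixed by $\chi$. A meromorphic eigenfunction on the tame cover can only have a pole whose order $\lambda$ (in lower numbering) lies in the residue class modulo $m_b$ prescribed by its eigencharacter; combined with the previous paragraph this forces $\lambda \equiv a_i' \pmod{m_b}$, where $a_i' = a_i/\gcd(m,a_i)$. Finally, passing from $\lambda$ to the upper-numbering conductor via the Herbrand function (\cite[IV \S3]{Se:lf}) gives $\sigma_b = \lambda/m_b$ when $i=1$ (and the analogous scaled expression for general $i$), so that
$$\langle \sigma_b \rangle = \Big\langle \tfrac{\lambda}{m_b} \Big\rangle = \tfrac{a_i'}{m_b} = \tfrac{a_i}{m};$$
by Lemma \ref{Lramdenominator} we already know $\sigma_b \in \tfrac1m\ints$, which is the consistency check that no further denominators intervene.

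The main obstacle is the bookkeeping in the third step: tracking the exact eigencharacter of the representative $h$ under the tame action and matching it precisely (not merely up to sign or inverse) with the Kummer exponent $a_i$, and doing so for arbitrary $p^i$ rather than only $p$---this is exactly where the argument must go beyond Wewers's $\ints/p \rtimes \ints/m$ computation in \cite[Proposition 1.8 (i)]{We:mc}. Keeping the normalization of $z$ and the choice of $\chi$ coherent through the reduction, and confirming that the higher-numbering passage introduces only the tame index $m_b$ in the denominator, are the delicate points; everything else is routine local Kummer and Herbrand calculus.
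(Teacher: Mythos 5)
Your computation for a $\ints/p \rtimes \ints/m_b$ local extension is essentially the paper's: reduce the Kummer equation to the tail, identify the tame character at the intersection point, and use the equivariance of the Artin--Schreier function under the $\ints/m_b$-action to pin down the pole order, hence the jump, modulo $m_b$. (The paper phrases your ``eigenvector'' constraint explicitly as the commutator relation $d^{a_i}q = q^{\ol{\chi}(a_i)}d^{a_i}$ applied to the Artin--Schreier generator $y$; the bookkeeping you worry about in your last paragraph is carried out there and does close up, yielding $\sigma_b m \equiv a_i \pmod{m}$.)

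The genuine gap is the passage to general $p^a$, which you flag as ``the analogous scaled expression for general $i$'' but never supply. For $a \geq 2$ the invariant $\sigma_b$ is the \emph{largest} upper-numbering jump, and it is not determined by the pole order of the first Witt-vector component alone --- the higher components govern the higher jumps --- so a direct Artin--Schreier--Witt computation of $\sigma_b$ is not just delicate bookkeeping; it requires data your setup does not control. The paper sidesteps this with two observations you are missing: first, by \cite[Theorem 1.1]{OP:wt}, all positive upper-numbering jumps of such a totally ramified $\ints/p^a \rtimes \ints/m_b$-extension of $k[[u^{-1}]]$ have the \emph{same} fractional part, so $\sigma_b$ may be replaced by the first positive jump; second, the upper numbering is invariant under quotients (\cite[IV, Proposition 14]{Se:lf}), so that first jump is computed in the $\ints/p \rtimes \ints/m_b$-quotient, i.e.\ one may assume $a=1$ from the outset. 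Your appeal to Lemma \ref{Lramdenominator} as a ``consistency check'' does not substitute for the first observation: knowing $\sigma_b \in \frac{1}{m}\ints$ says nothing about which class modulo $1$ the top jump occupies. With these two points inserted, the rest of your argument goes through and coincides with the paper's proof.
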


\begin{proof}
Let $u$ be a coordinate on $\ol{X}_b$ such that $u = \infty$ corresponds to the intersection point $\ol{x}_b$ with the rest of $\ol{X}^{ss}$.
If $H = \ints/p^a \rtimes \ints/m_b$ is the decomposition group of an irreducible component of $\ol{Y}^{ss}$ above $\ol{X}_b$, then
$\sigma_b$ is defined by a jump in the upper numbering of an $H$-Galois extension of $k[[u^{-1}]]$, 
and replacing $\sigma_b$ by any other jump does not change its fractional part (\cite[Theorem 1.1]{OP:wt}).  So it suffices to prove the 
theorem for the \emph{first} positive upper jump $\sigma_b'$ of the Galois extension in Definition \ref{Draminvariant} (i.e., the smallest number $i >0$ 
such that $H^i \supsetneq H^{i+\epsilon}$ for all $\epsilon > 0$).  Since the upper numbering is invariant under taking quotients 
(\cite[IV, Proposition 14]{Se:lf}), taking the quotient of the cover $f^{ss}$ by the unique subgroup of order $p^{a-1}$ in $G$ does not affect the 
quantities in the proposition, and we may assume $a = 1$. 

Let $\ol{Y}_b = \ol{Y}^{ss} \times_{\ol{X}^{ss}} \ol{X}_b$, which is a disjoint union of $H$-covers of $\ol{X}_b$.  
Let $\ol{V}_b \subseteq \ol{Y}_b$ be a disjoint union of $m/m_b$ of these covers on which a group $\Gamma \cong \ints/p \rtimes \ints/m$ 
with $H \subseteq \Gamma \subseteq G$ acts transitively. 
If we let $\tau_i$ be the point where $\ol{X}_b$ intersects the rest of $\ol{X}^{ss}$, then the quotient $\ol{V}_b/(\ints/p) \to \ol{X}_b$
is of type $(\ol{x}_i, \tau_i, a_i, m-a_i)$ with respect to $\ol{\chi}$ in the sense of \cite[p.\ 116]{We:mc}.  
Thus, the proof of \cite[Proposition 1.8 (i)]{We:mc} carries through verbatim to prove the proposition.
%
%
\end{proof}

The term ``multiplicative type" is closely related to multiplicative reduction, as the next proposition shows.
\begin{prop}\label{Pmulttypered}
If $f: Y \to X$ is of multiplicative type with $G = \ints/p^s \rtimes \ints/m$, 
then any semistable model $f^{ss}$ has multiplicative reduction (\S\ref{Smultred}) above every irreducible component of $\ol{X}^{adm}$.
\end{prop}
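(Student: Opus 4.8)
The plan is to fix an inseparable component $\ol{W}$ of $\ol{X}^{ss}$ and verify the definition of \S\ref{Smultred} directly, exploiting the explicit equation \eqref{Emulttype} together with the hypothesis $\sum_{i=1}^r a_i = m$. There is nothing to check above an \'etale component, since the relevant $p$-inertia is then trivial, so we may restrict to inseparable components (the strict transforms of the components of $\ol{X}^{adm}$, together with any inseparable exceptional components of the blowup). Write $B/C$ for the totally ramified $\ints/p^j$-extension attached to $\ol{W}$ as in \S\ref{Smultred}; the goal is that $B/C$ be potentially of $\mu_{p^j}$-type with respect to $\Frac(R)$. By Lemma \ref{Lpimpliespn}, applied with $F = \Frac(R)$ after the harmless base change arranging that the residue field is algebraically closed and $C$ is unramified over $F$, it suffices to prove that the unique $\ints/p$-subextension $M/C$ is potentially of $\mu_p$-type.

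First I would control the prime-to-$p$ (Kummer) part. Because $\sum_{i=1}^r a_i = m$, the $\ints/m$-cover $g \colon Z \to X$ of \eqref{Emulttype} is unramified at $\infty$ and the divisor of $z$ is balanced, so $g$ extends to a finite $\ints/m$-cover of the chosen model; after rescaling $z$ by a power of the uniformizer so that its valuation along $\ol{W}$ is $0$, the reduction $\ol{z}$ is a well-defined nonconstant rational eigenfunction on the component of $\ol{Z}^{ss}$ above $\ol{W}$ (with $c$ acting through $\ol{\chi}$). This is precisely where the equality $\sum a_i = m$, rather than a larger multiple of $m$, enters: it guarantees that no component acquires the unbalanced divisor responsible for additive (``new tail'') behaviour, and hence that $\ol{z}$ is genuinely nonconstant, so not a $p$th power in $\kappa_C$.

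Next I would present the $p$-layer as a Kummer extension over $C$ and read off its reduction from $\ol{z}$. The extension $M/C$ is obtained by extracting a $p$th root of some $a$, and we may normalize $v(a) = 0$: either by Proposition \ref{Pval0} when the residual action of $\ints/m$ is nontrivial (so that the relevant quotient of the decomposition group is a non-commutative $\ints/p \rtimes \ints/m_0$), or directly from the balanced divisor of $z$ otherwise. For such an $a$ there is a clean dichotomy, underlying Corollary \ref{Cgetalphap}: $M/C$ is of $\mu_p$-type exactly when $\ol{a}$ is not a $p$th power, and is naively ramified otherwise. Since $Y \to Z$ is \'etale it contributes no ramification, so the generator $a$ is, up to the adjustments above, a power of $z$, and its reduction is governed by the nonconstant $\ol{z}$ of the previous step. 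Hence $\ol{a}$ is not a $p$th power, $M/C$ is of $\mu_p$-type, and Lemma \ref{Lpimpliespn} upgrades this to $B/C$ being potentially of $\mu_{p^j}$-type. As $\ol{W}$ was an arbitrary inseparable component, the proposition follows.

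The hard part will be the step matching the Kummer generator $a$ of the reduced $p$-extension with a power of $z$ and verifying the non-$p$th-power property uniformly over all inseparable components, including interior ones where Proposition \ref{Pinvarianteq} does not apply. This is exactly where the global multiplicative-type hypothesis $\sum a_i = m$ must be fed into the purely local extension $B/C$, and it is the analogue of the corresponding verification in \cite[Proposition 1.8(i)]{We:mc}.
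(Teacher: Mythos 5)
Your reduction of the general case to the unique $\ints/p$-subextension via Lemma \ref{Lpimpliespn} (after arranging algebraically closed residue field) is exactly the paper's strategy for passing from $s=1$ to $s\geq 1$. But the paper does not then prove the resulting $s=1$ statement from scratch: it quotes \cite[Proposition 1.3]{We:mc} together with \cite[p.\ 190]{Ra:pg} (see the proof of \cite[Corollary 1.5]{We:mc}), where multiplicative reduction of a $\ints/p \rtimes \ints/m$-cover of multiplicative type is established by a degeneration argument for torsors under group schemes of order $p$. Your proposal instead attempts a direct Kummer-theoretic verification, and that is where it breaks down.

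The gap is the step ``the generator $a$ is, up to the adjustments above, a power of $z$.'' The wild part $Y \to Z$ is an \emph{\'etale} $\ints/p^s$-cover of the curve $Z$; after adjoining $\zeta_p$, its $\ints/p$-quotient is generated by $\sqrt[p]{u}$ for a function $u$ on $Z$ whose divisor lies in $p\Div(Z)$ and which is an eigenfunction for the $\ints/m$-action only modulo $p$th powers. There is no reason for $u$ to be a power of $z$, and in general it is not: such \'etale covers are classified by $p$-torsion in (an eigenspace of) the Jacobian of $Z$, which the function $z$ does not see. Consequently the reduction of the Kummer generator of $M/C$ along an inseparable component is not ``governed by $\ol{z}$,'' and the hypothesis $\sum_{i=1}^r a_i = m$ cannot be fed into the local extension $B/C$ by the route you describe. (A secondary issue: even where $\ol{z}$ is relevant, ``nonconstant'' does not imply ``not a $p$th power'' in a characteristic-$p$ function field.) You flag this step yourself as ``the hard part,'' and it is: it is essentially the entire content of the cited results of Wewers and Raynaud, which proceed not by exhibiting a Kummer generator with unit, non-$p$th-power reduction, but by showing that the $\ints/p \cong \mu_p$-torsor on the generic fibre extends to a torsor under a finite flat group scheme of order $p$ whose special fibre is forced to be $\mu_p$ (rather than $\alpha_p$ or $(\ints/p)_k$) by the condition on the residues of the associated deformation datum, i.e.\ precisely by $\sum_{i=1}^r a_i = m$. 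Either quote those results, as the paper does, or supply that degeneration argument; the proposal as written does not contain it.
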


\begin{proof}
If $s = 1$, then this follows from \cite[Proposition 1.3]{We:mc} and \cite[p.\ 190]{Ra:pg} (see proof of \cite[Corollary 1.5]{We:mc}).
If $s \geq 1$, then $f$ has a quotient $\ints/p \rtimes \ints/m$-cover $h: W \to X$, which is also of multiplicative type.  Thus any semistable model
$h^{ss}$ of $h$ has multiplicative reduction above all of $\ol{X}^{adm}$.  This implies that each irreducible component of $\ol{X}^{adm}$
is a $p^s$-component for $f$.  Let $K^{ss}/K$ be a finite extension over which $f^{ss}$ is defined.  Since $K^{ss}$ has algebraically closed residue field, 
Lemma \ref{Lpimpliespn} implies that $f^{ss}$ has multiplicative reduction over all of $\ol{X}^{adm}$.
\end{proof}

\section{The auxiliary cover}\label{Saux}
Our goal in this section is to simplify the group-theoretical structure of a $G$-cover $f$ by replacing it with the so-called \emph{auxiliary cover}, which is
originally an idea of Raynaud.  This will be a cover whose geometry is similar to $f$, but which has more branch points and a simpler 
Galois group.  In particular, the auxiliary cover will have a quotient, called the \emph{strong auxiliary cover}, whose Galois group is isomorphic
to $\ints/p^s \rtimes \ints/m$, for some $s$ and some $m$ that is prime to $p$.  The main goal of this section is to show (Proposition 
\ref{Pmulttype}) that the strong auxiliary cover of a three-point $G$-cover with bad reduction is of multiplicative type when $G$ has a cyclic $p$-Sylow
subgroup.  
This will have the consequence (Corollary \ref{Cmultred}) that the original cover has multiplicative reduction over the original component.

Specifically, let $G$ be a finite group with cyclic $p$-Sylow group.
Assume that $f: Y \to X = \proj^1$ is a $G$-cover 
defined over $K$ as in \S\ref{Sstable} with bad reduction, such that the specializations of the branch points do not collide on the special fiber of $X_R$.
By the construction given at the beginning of \cite[\S7]{Ob:fm2} (which is originally based on \cite[\S3.2]{Ra:sp}), one can construct 
(over some finite extension $K'/K$) an auxiliary cover $f^{aux}: Y^{aux} \to X^{aux} := X$ with a semistable model 
$(f^{aux})^{ss}: (Y^{aux})^{ss} \to (X^{aux})^{ss}$ and special fiber $\ol{f}^{aux}: \ol{Y}^{aux} \to \ol{X}^{aux}$.
We do not repeat the construction here, but we summarize the important properties, which all follow from \cite[\S7]{Ob:fm2}.  In particular, 
properties (ii), (iii) and (v) of Proposition \ref{Paux} below show that $f^{aux}$ has similar geometry to $f$, whereas property (vi) 
shows that the group theory of $f^{aux}$ is rather simple.

\begin{prop}\label{Paux}
Assume that $p$ does not divide any branch index of $f$. 
\begin{enumerate}[(i)]
\item The cover $f^{aux}$ is a $G^{aux}$-Galois cover, where $G^{aux} \leq G$.
\item We have $(X^{aux})^{ss} = X^{st}$ and $\ol{X}^{aux} = \ol{X}$.  
\item There exists an \'{e}tale neighborhood $Z$ (relative to $X^{st}$) 
of the union $\ol{U}$ of the inseparable components of $\ol{X}$, such that
the cover $f^{st} \times_{X^{st}} Z$ is isomorphic to $\Ind_{G^{aux}}^G (f^{aux})^{ss} \times_{X^{st}} Z$.  
\item The cover $f^{aux}$ has a branch point $x_b$ of index $m_b$ for each \'{e}tale tail $\ol{X}_b$ of $\ol{X}$ such that
$m_b > 1$ (Definition \ref{Draminvariant}).
If $\ol{X}_b$ is a primitive tail, then $x_b$ is the corresponding point branched in $f$.  If $\ol{X}_b$ is a new tail, then
$x_b$ specializes to a smooth point of $\ol{X}_b$.  These points comprise the entire branch locus of $f^{aux}$ (see Lemma \ref{Lcorrectspec}).
\item If $\ol{X}_b$ is an \'{e}tale tail of $\ol{X}$, 
and if $\ol{V}_b$ is an irreducible component of $\ol{Y}^{aux}$ above $\ol{X}_b$, then $\ol{V}_b \to \ol{X}_b$
is generically \'{e}tale.  If the effective ramification invariant $\sigma_b^{aux}$ of $\ol{V}_b \to \ol{X}_b$ is defined as in Definition \ref{Draminvariant},
then $\sigma_b^{aux} = \sigma_b$.
\item If $N$ is the maximal prime-to-$p$ normal subgroup of $G^{aux}$, then
$G^{aux}/N \cong \ints/p^s \rtimes \ints/m_{G^{aux}}$, where $s \geq 1$ and the action of $\ints/m_{G^{aux}}$ on $\ints/p^s$ is faithful. 
\end{enumerate}
\end{prop}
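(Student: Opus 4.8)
The plan is to recall just enough of the construction of $f^{aux}$ from \cite[\S7]{Ob:fm2} (itself modelled on \cite[\S3.2]{Ra:sp}) to see where each property comes from; the substantive analytic work having been done there, the task here is mainly one of identifying the right subgroup and reading off local data. The guiding idea is that $f^{aux}$ leaves $f^{st}$ untouched over an \'{e}tale neighborhood $Z$ of the inseparable locus $\ol{U}$ and surgically replaces it over each \'{e}tale tail by the simplest cover carrying the prescribed ramification at the point where the tail meets $\ol{U}$. I would begin from the restriction $f^{st}\times_{X^{st}} Z$: this is a $G$-cover, but over the (connected) inseparable part it need not be connected, and I would \emph{define} $G^{aux}\leq G$ to be the stabiliser of one connected piece. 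Declaring $(f^{aux})^{st}$ to be that piece, extended across the tails, then forces (i) and the comparison isomorphism (iii) to hold simultaneously and essentially by definition, with $\Ind_{G^{aux}}^G$ accounting for the $[G:G^{aux}]$ conjugate pieces that reassemble the original $G$-cover.

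Property (ii) is then formal: the surgery alters the cover only in the fiber direction and over the tails, so the base model is unchanged and $(X^{aux})^{ss}=X^{st}$, $\ol{X}^{aux}=\ol{X}$. The real content is in (iv) and (v). Here I would use that each \'{e}tale tail $\ol{X}_b$ meets $\ol{U}$ in a single point $\ol{x}_b$ whose germ already lies in the neighborhood $Z$ where (iii) pins the cover down. One caps off the tail by the cover branched only at $\ol{x}_b$ (of index $m_b$) and at one further smooth point --- the original branch point of $f$ for a primitive tail, or a freshly chosen point for a new tail --- and the whole design of \cite[\S7]{Ob:fm2} is arranged so that the local extension $\hat{\mc{O}}_{\ol{Y}_b,\ol{y}_b}/\hat{\mc{O}}_{\ol{X}_b,\ol{x}_b}$ agrees with the one coming from $f$. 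This identifies the branch locus of $f^{aux}$ as in (iv) and yields $\sigma_b^{aux}=\sigma_b$ in (v), since the effective ramification invariant is nothing but the upper-numbering conductor of that local extension. I expect this compatibility to be the main obstacle: it is exactly the point at which the modification over the tails must be made \emph{without} disturbing the wild ramification measured at $\ol{x}_b$, and it is what the careful construction in \cite{Ob:fm2} is engineered to guarantee.

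Finally, (vi) is a group-theoretic consequence of the construction. The subgroup $G^{aux}$ inherits a cyclic $p$-Sylow subgroup from $G$, and since it is generated by decomposition data over $\ol{U}$ it contains a nontrivial normal $p$-subgroup coming from an inertia group $I_{\ol{V}}$ (bad reduction guarantees $\ol{U}$, and hence this inertia group, is nontrivial, giving $s\geq 1$). Passing to the quotient $G^{aux}/N$ by the maximal normal prime-to-$p$ subgroup $N$, the argument recalled in \S\ref{Smultred} shows the $p$-Sylow subgroup $P\cong \ints/p^s$ becomes normal, so Schur--Zassenhaus gives $G^{aux}/N\cong \ints/p^s\rtimes \ints/m_{G^{aux}}$. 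Faithfulness of the action is forced by the very definition $m_{G^{aux}}=|N_{G^{aux}}(P)|/|Z_{G^{aux}}(P)|$ together with the injectivity of the conjugation character (as in \cite[Lemma 2.1]{Ob:vc}): anything centralising $P$ is absorbed into $N$ upon taking the quotient.
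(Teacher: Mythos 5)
The paper gives no proof of this proposition at all: it explicitly defers to the construction in \cite[\S7]{Ob:fm2} (itself based on \cite[\S3.2]{Ra:sp}) and simply lists properties (i)--(vi) as consequences of that construction. Your sketch is a faithful outline of exactly that construction --- $G^{aux}$ as the stabilizer of a connected component of $f^{st}$ over an \'{e}tale neighborhood of $\ol{U}$, with the cover reglued over the \'{e}tale tails so as to preserve the boundary (hence the invariants $\sigma_b$) --- so it takes essentially the same route as the paper, with the substantive verifications (notably the normality of the $p$-subgroup needed for (vi) and the patching over the tails for (iv)--(v)) still residing in the cited reference.
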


\begin{remark}\label{Raux}
In the context of Proposition \ref{Paux} (iii), $\ol{U}$ is a tree, and thus has trivial fundamental group.  So $\ol{f} \times_{\ol{X}} \ol{U}$ is isomorphic to
$\Ind_{G^{aux}}^G \ol{f}^{aux} \times_{\ol{X}} \ol{U}$.  In particular, the inseparable components of $\ol{X}$ for the auxiliary
cover are the same as for the original cover.
\end{remark}

In light of Proposition \ref{Paux} (vi), write $G^{str} := G^{aux}/N \cong \ints/p^s \rtimes \ints/m_{G^{aux}}$, 
where $N$ is the maximal prime-to-$p$ normal subgroup of $G^{aux}$. 
The canonical $G^{str}$-quotient gover of $f^{aux}$ is called the \emph{strong auxiliary cover}, and is written
$f^{str}: Y^{str} \to X^{str} := X^{aux} = X$.  We also write $(f^{str})^{ss}: (Y^{str})^{ss} \to (X^{str})^{ss}$ and $\ol{f}^{str}: \ol{Y}^{str} \to \ol{X}^{str}$ 
for $(f^{aux})^{ss}/N$ and its reduction, respectively.
Since the higher ramification filtration for the upper numbering is invariant under taking quotients 
(\cite[IV, Proposition 14]{Se:lf}), the effective ramification invariants
for the strong auxiliary cover are the same as those for the auxiliary cover, which (by Proposition \ref{Paux} (v)) 
are the same as those for the original cover.

\begin{prop}\label{Pmulttype}
The strong auxiliary cover $f^{str}$ of a \emph{three-point} $G$-cover $f$ with prime-to-$p$ branching is of multiplicative type (\S\ref{Stame}).
\end{prop}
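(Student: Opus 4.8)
The plan is to verify the single defining condition of \S\ref{Stame}: writing the $\ints/m$-subcover of $f^{str}$ birationally as $z^m = \prod_i (x-x_i)^{a_i}$ with $0 \le a_i \le m$, I must show $\sum_i a_i = m$ (the weaker relation $\sum_i a_i \equiv 0 \pmod m$ being automatic from unramifiedness at $\infty$). First I would use Proposition \ref{Paux} (vi) to record that $G^{str} \cong \ints/p^s \rtimes \ints/m$ with $m = m_{G^{aux}}$ and faithful action, so that $f^{str}$ is precisely a cover of the kind treated in \S\ref{Stame}; the prime-to-$p$ branching hypothesis makes the equation for the $\ints/m$-subcover legitimate. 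The important point is that Proposition \ref{Pinvarianteq} was proved for an arbitrary such $\ints/p^s \rtimes \ints/m$-cover and does \emph{not} presuppose multiplicative type, so I am free to apply it to $f^{str}$ without circularity.

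Next I would pin down the branch points and the exponents $a_i$. By Proposition \ref{Paux} (iv) the branch points of $f^{aux}$, hence of $f^{str}$, are exactly the points $x_b$ attached to the étale tails $\ol{X}_b$ of $\ol{X}$ with $m_b > 1$, one per tail. Each such tail meets an inseparable component — under bad reduction every étale component is a tail (Lemma \ref{Letaletail}), so a tail can only abut an interior, necessarily inseparable, component — and carries a single branch point, so the hypotheses of Proposition \ref{Pinvarianteq} are met and it gives $a_b = m\langle \sigma_b \rangle$. Here $\sigma_b$ is the effective ramification invariant, which by Proposition \ref{Paux} (v) together with the quotient-invariance of the upper numbering coincides with the invariant of the original cover $f$. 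Tails with $m_b = 1$ are purely wildly ramified, so the relevant local Galois group is a cyclic $p$-group and its conductor $\sigma_b$ is an integer by Hasse--Arf; such tails are not branch points of the $\ints/m$-subcover and contribute $\langle \sigma_b \rangle = 0$. Summing over all étale tails therefore gives $\sum_i a_i = m \sum_b \langle \sigma_b \rangle$.

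It remains to evaluate $\sum_b \langle \sigma_b \rangle$, and this is where the three-point hypothesis enters, through the vanishing cycles formula (Theorem \ref{Tvancycles}). For $r = 3$ it reads $1 = \sum_{b \in B_{\text{new}}}(\sigma_b - 1) + \sum_{b \in B_{\text{prim}}} \sigma_b$. Writing $\langle \sigma_b \rangle = \sigma_b - \lfloor \sigma_b \rfloor$, I would match the two sums by showing that $\lfloor \sigma_b \rfloor = 0$ for every primitive tail and $\lfloor \sigma_b \rfloor = 1$ for every new tail; granting this, $\sum_b \langle \sigma_b \rangle = \sum_{\text{prim}} \sigma_b + \sum_{\text{new}}(\sigma_b - 1) = 1$, so $\sum_i a_i = m$ and $f^{str}$ is of multiplicative type.

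The heart of the matter — and the step I expect to be the main obstacle — is this control of the integer parts $\lfloor \sigma_b \rfloor$. The lower bounds are immediate from Lemma \ref{Lramdenominator}: $\sigma_b > 0$ for all tails, and $\sigma_b \ge 1 + 1/m_G > 1$ for new tails. For the upper bounds I would exploit that in the $r = 3$ vanishing cycles formula every summand is strictly positive while the total is exactly $1$, so each summand is at most $1$; this forces $\sigma_b < 2$ on new tails and $\sigma_b < 1$ on primitive tails as soon as at least two tails contribute. The degenerate configurations — a lone primitive tail with $\sigma_b = 1$, or a lone new tail with $\sigma_b = 2$ — would make every $a_b$ vanish, rendering the $\ints/m$-subcover, and hence $f^{str}$, unramified and so trivial over $\proj^1$, contrary to bad reduction; and the stability requirement that the inseparable original component carry at least three marked points rules out all three branch points specializing to a single tail. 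Disposing cleanly of these boundary cases is the one place where genuine care is needed.
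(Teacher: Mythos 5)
Your proof is correct and follows essentially the same route as the paper: the vanishing cycles formula together with Lemma \ref{Lramdenominator} forces every $\sigma_b$ to be a noninteger, whence $m_b>1$ for each tail, Proposition \ref{Paux} (iv) then puts exactly one branch point on each \'{e}tale tail, and Proposition \ref{Pinvarianteq} converts $\sum_b \langle \sigma_b \rangle = 1$ into $\sum_i a_i = m$. The one (harmless) divergence is in excluding the degenerate case of an integral $\sigma_b$: the paper just notes that the three branch points yield three distinct primitive tails, so there is more than one positive summand and each is $<1$, whereas you rule out a lone tail with integral invariant by observing it would leave the $\ints/m$-subcover, hence $f^{str}$, unramified and therefore trivial.
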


\begin{proof}
For each \'{e}tale tail $\ol{X}_b$ of $\ol{X}^{str}$, let $\sigma_b$ be its effective ramification invariant, which is equal to the corresponding 
effective ramification invariant of $f$.  The vanishing cycles formula (\ref{Evancycles}) shows that  
$$1 = \sum_{b \in B_{\text{new}}} (\sigma_b - 1) + \sum_{b \in B_{\text{prim}}} \sigma_b,$$
where $b$ ranges over the \'{e}tale tails of $\ol{X}^{str}$ (here ``new" and ``primitive" refer to the corresponding tails of $f$).  
By Lemma \ref{Lramdenominator}, each term in the sums on the right hand side is positive.  
By Lemma \ref{Lcorrectspec}, there are three primitive \'{e}tale tails, 
thus namely more than one \'{e}tale tail, so each term is also strictly between $0$ and $1$.
Thus we conclude both that $\sum_b {\langle \sigma_b \rangle} = 1$ and that $m_b > 1$ for each \'{e}tale tail.  

By Proposition \ref{Paux} (iv), each \'{e}tale tail contains the specialization of exactly
one branch point of $f^{str}$.  Also, $f^{str}$ has a $\ints/m_{G^{aux}}$-quotient cover given by an equation in the form of 
(\ref{Emulttype}).  Since $\sum_b {\langle \sigma_b \rangle} = 1$, Proposition \ref{Pinvarianteq} shows that $f^{str}$ is of multiplicative type.
\end{proof}

\begin{corollary}\label{Cmultred}
Let $f: Y \to X$ be a three-point $G$-cover with bad reduction, such that all branching indices are prime to $p$.  
Then the stable model of $f$ has multiplicative reduction over the original component. 
\end{corollary}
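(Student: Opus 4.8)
The plan is to deduce the statement from the multiplicative type of the strong auxiliary cover $f^{str}$, and then to push the resulting multiplicative reduction back down to $f$ via the auxiliary cover. To begin, since $f$ has bad reduction, Lemma \ref{Letaletail} shows that the original component $\ol{X}_0$ is an inseparable component. Because $f$ is a \emph{three-point} cover, its branch points $0,1,\infty$ are distinct and do not collide on the special fiber of $X_R$; consequently $\ol{X}_0$ (the strict transform of $\proj^1_k$, carrying the three marked points coming from $0,1,\infty$) occurs as a component of the admissible reduction $\ol{X}^{adm}$ attached to $f^{str}$ in \S\ref{Stame}.

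Next I would invoke Proposition \ref{Pmulttype}, which gives that $f^{str}$ is of multiplicative type. Proposition \ref{Pmulttypered} then applies directly: any semistable model of $f^{str}$ has multiplicative reduction (\S\ref{Smultred}) above every irreducible component of $\ol{X}^{adm}$, and in particular above $\ol{X}_0$. Here I would also recall from the proof of Proposition \ref{Pmulttypered} that every component of $\ol{X}^{adm}$ is a $p^s$-component, so that asking for multiplicative reduction above $\ol{X}_0$ is meaningful.

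The substantive step is to transfer multiplicative reduction from $f^{str}$ down to $f$, which I would carry out in two stages. First, since $f^{str} = f^{aux}/N$ with $N$ the maximal prime-to-$p$ normal subgroup of $G^{aux}$, I claim that multiplicative reduction above $\ol{X}_0$ is the same condition for $f^{aux}$ as for $f^{str}$: the extension $B/C$ defining multiplicative reduction in \S\ref{Smultred} is formed only after quotienting by the maximal normal prime-to-$p$ subgroup of the relevant decomposition group, so quotienting first by $N$ and then by the residual prime-to-$p$ part produces the same complete DVR extension as quotienting directly. Second, by Proposition \ref{Paux}(iii), over an \'etale neighborhood $Z$ of the inseparable components (which contains $\ol{X}_0$) the cover $f^{st}$ is isomorphic to $\Ind_{G^{aux}}^G (f^{aux})^{st}$; fixing a component $\ol{V}$ of $\ol{Y}$ above $\ol{X}_0$, this isomorphism identifies its generic complete local ring and its decomposition group $D_{\ol{V}}$ with those of a component $\ol{V}^{aux}$ of $\ol{Y}^{aux}$ (elements of $G \setminus G^{aux}$ permute the copies of $Y^{aux}$ nontrivially and so cannot fix $\ol{V}$). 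Hence the extension $B/C$ for $f$ above $\ol{X}_0$ is literally that for $f^{aux}$, and multiplicative reduction descends from $f^{str}$ to $f^{aux}$ to $f$.

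The hard part will be exactly this last descent---verifying that the $\mu_{p^s}$-type extension $B/C$ entering the definition of multiplicative reduction is genuinely insensitive both to the prime-to-$p$ quotient $f^{aux} \to f^{str}$ and to the induction $\Ind_{G^{aux}}^G$ relating $f^{aux}$ to $f$. Once that compatibility is in place, the corollary is simply the combination of Lemma \ref{Letaletail}, Proposition \ref{Pmulttype}, and Proposition \ref{Pmulttypered}.
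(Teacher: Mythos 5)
Your proposal is correct and follows essentially the same route as the paper: Propositions \ref{Pmulttype} and \ref{Pmulttypered} give multiplicative reduction of $f^{str}$ over the original component, the prime-to-$p$ quotient $f^{aux} \to f^{str}$ is absorbed into the definition of multiplicative reduction in \S\ref{Smultred}, and Proposition \ref{Paux}(iii) transfers the conclusion from $f^{aux}$ to $f$. The extra details you supply (that $\ol{X}_0$ appears in $\ol{X}^{adm}$, and why the induction identifies the relevant extension $B/C$) are correct elaborations of steps the paper leaves implicit.
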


\begin{proof}
By Propositions \ref{Pmulttype} and \ref{Pmulttypered}, any semistable model of the strong auxiliary cover $f^{str}$ has multiplicative 
reduction over the original component.  Since $f^{str}$ is a prime-to-$p$ quotient of
the auxiliary cover $f^{aux}$, we have that any semistable model of $f^{aux}$ also has multiplicative reduction over the original component.  By 
Proposition \ref{Paux} (iii), over an \'{e}tale neighborhood of the original component, the cover $f$ is isomorphic to a disjoint union 
of copies of $f^{aux}$.  Thus the stable model of $f$ also has multiplicative reduction over the original component.
\end{proof}

\begin{remark}\label{Rwewers}
If $v_p(|G|) = 1$, then Corollary \ref{Cmultred} follows rather easily from \cite[\S1.4 and the proof of Corollary 1.5]{We:mc}.  Indeed, 
the vanishing cycles formula and the auxiliary cover construction were already known for $v_p(|G|) = 1$ at the time of \cite{We:mc}.
The key new ingredients for the general case are the auxiliary cover construction and the vanishing cycles formula when $G$ has an arbitrarily large
cyclic $p$-Sylow subgroup.
\end{remark}

\begin{lemma}\label{Lnoncomm}
Let $f: Y \to X$ be a $G$-cover branched at $r \geq 3$ points as in this section, with bad reduction, such that all branching indices are prime to $p$.
Let $\ol{V}$ be an irreducible component above the original component, with decomposition group $D_{\ol{V}} \subseteq G$.
Then $m_{D_{\ol{V}}} > 1$.
\end{lemma}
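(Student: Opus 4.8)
The plan is to reduce everything to the auxiliary cover and then read the conclusion off its group-theoretic structure in Proposition~\ref{Paux}. Since $f$ has bad reduction, Lemma~\ref{Letaletail} shows that the original component is inseparable, so the generic inertia $I_{\ol V}$ is a nontrivial cyclic $p$-group. By Proposition~\ref{Paux}~(iii) and Remark~\ref{Raux}, over an \'etale neighborhood of the union $\ol U$ of inseparable components (which contains the original component) the cover $f$ is isomorphic to $\Ind_{G^{aux}}^G f^{aux}$. Hence a component $\ol V$ of $\ol Y$ over the original component corresponds to a component $\ol V^{aux}$ of $\ol Y^{aux}$ over the original component, and $D_{\ol V}\le G$ is conjugate in $G$ to the decomposition group of $\ol V^{aux}$ inside $G^{aux}$. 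Since $m_{(\cdot)}=|N_{(\cdot)}(P)|/|Z_{(\cdot)}(P)|$ depends only on the isomorphism type of the group, $m_{D_{\ol V}}=m_{D_{\ol V^{aux}}}$, so I may work with the auxiliary cover throughout and assume $G=G^{aux}$.

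Next I would show $m_{G^{aux}}>1$. Since the original component is inseparable, $f^{aux}$ is ramified, hence branched; by Proposition~\ref{Paux}~(iv) its branch points are indexed exactly by the \'etale tails $\ol X_b$ with $m_b>1$, so at least one such tail exists. Lemma~\ref{Lramdenominator} gives $\sigma_b\in\frac1{m_{G^{aux}}}\ints$, while the local $\ints/p^{a}\rtimes\ints/m_b$ structure of the tail forces $\sigma_b$ to have denominator $m_b$; thus $m_b\mid m_{G^{aux}}$ and $m_{G^{aux}}\ge m_b>1$. Equivalently, by Proposition~\ref{Paux}~(vi) the quotient $G^{aux}/N\cong\ints/p^{s}\rtimes\ints/m_{G^{aux}}$ acts faithfully and nontrivially on its $p$-Sylow $P$, so some prime-to-$p$ element of $G^{aux}$ normalizes $P$ and acts nontrivially on it.

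Finally, I would conclude $m_{D_{\ol V}}=m_{G^{aux}}>1$ by showing that $D_{\ol V}=G^{aux}$, i.e.\ that $\ol Y^{aux}$ is connected over the original component. For a three-point cover this is transparent: the prime-to-$p$ quotient of $\ol V\to\ol X_0$ is the reduction of the irreducible Kummer $\ints/m_{G^{aux}}$-cover $z^{m}=\prod(x-x_i)^{a_i}$, whose decomposition group is all of $\ints/m_{G^{aux}}$, with the $p$-part sitting inseparably above it. In general I would transport the nontrivial prime-to-$p$ action found above inward from a tail with $m_b>1$ along the tree $\ol U$ of inseparable components, using that the wild inertia is constant up to conjugacy along $\ol U$ and that a prime-to-$p$ element normalizing it and acting nontrivially lies in a decomposition group at a node, hence in a decomposition group over the original component. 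I expect this connectivity/transport step — ruling out that $D_{\ol V}$ is a proper subgroup of $G^{aux}$ whose prime-to-$p$ part happens to centralize the $p$-Sylow — to be the main obstacle, and it is precisely the point where the detailed construction of the auxiliary cover in \cite[\S7]{Ob:fm2} must be invoked.
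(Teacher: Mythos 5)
Your reduction to the (strong) auxiliary cover via Proposition \ref{Paux} (iii) and Remark \ref{Raux}, and your identification of the real issue---whether the decomposition group over the original component contains a nontrivial prime-to-$p$ element---do match the paper's strategy. But the step you flag as ``the main obstacle'' is a genuine gap, and it is not one that the construction in \cite[\S 7]{Ob:fm2} fills for you. Knowing that $m_{G^{aux}}>1$, i.e.\ that \emph{some} prime-to-$p$ element of $G^{str}$ acts nontrivially on the $p$-Sylow, says nothing about whether such an element lies in $D_{\ol{V}}$: the $\ints/m_{G^{str}}$-quotient cover $z^m=\prod_i(x-x_i)^{a_i}$ could a priori split completely over the original component, which happens exactly when $\prod_i(x-\ol{x}_i)^{a_i}$ reduces to an $m$th power in $k(x)$, and in that case $D_{\ol{V}}$ would be a $p$-group. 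Your ``transparent'' assertion that the reduced Kummer cover has full decomposition group is precisely the statement that needs proof, and your proposed transport of a prime-to-$p$ element inward from a tail along $\ol{U}$ does not work as stated: decomposition groups of tail components do not propagate to decomposition groups of interior components (only the inertia at a node ties adjacent components together, and a prime-to-$p$ element stabilizing a component over a tail need not stabilize any particular component over the original component).

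The paper closes this gap arithmetically rather than by connectivity or transport arguments. By Proposition \ref{Pinvarianteq}, the exponent $a_i$ at each branch point of $f^{str}$ satisfies $\langle\sigma_b\rangle=a_i/m$ for the tail containing its specialization, and the vanishing cycles formula (\ref{Evancycles}) then forces $\sum_i a_i=m(r-2)$. Since the $r$ original branch points specialize to distinct points, the $\ol{x}_i$ fall into at least $r$ residue classes, each with positive exponent sum; by pigeonhole some class has exponent sum strictly between $0$ and $m$, so $\prod_i(x-\ol{x}_i)^{a_i}$ is not an $m$th power and the tame quotient does not split completely over $\ol{X}_0$. This quantitative input from the vanishing cycles formula is the heart of the proof and is absent from your proposal; the purely local and group-theoretic facts you cite cannot replace it.
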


\begin{proof}
By Remark \ref{Raux}, $\ol{V}$ can be viewed as an irreducible component of $\ol{Y}^{aux}$ with decomposition group isomorphic to
$D_{\ol{V}}$, so it suffices to prove the lemma for the auxiliary cover $f^{aux}$.  Let $\ol{V}^{str}$ be the image of $\ol{V}$ 
under the canonical map $(Y^{aux})^{ss} \to (Y^{str})^{ss}$.  
The decomposition group $D_{\ol{V}^{str}}$ is a quotient of $D_{\ol{V}}$ by a prime-to-$p$ subgroup.
It follows easily that $m_{D_{\ol{V}^{str}}} = m_{D_{\ol{V}}}$.  Furthermore, since the strong auxiliary cover has Galois group
$G^{str} \cong \ints/p^s \rtimes \ints/m_{G^{aux}}$ (for some $s$), with faithful conjugation action of $\ints/m_{G^{aux}}$ on $\ints/p^s$, 
any subgroup $\Gamma$ of $G^{str}$ that has order divisible by $p$, but is not a $p$-group, has $m_{\Gamma} > 1$.  
Thus it suffices to show that $D_{\ol{V}^{str}}$ is not a $p$-group.  

Set $m = m_{G^{aux}}$.  Then $f^{str}$ has a $\ints/m$-quotient cover 
given birationally by 
\begin{equation}\label{Etamecover}
z^m = \prod_{i=1}^{\rho} (x-x_i)^{a_i},
\end{equation} 
where $0 < a_i < m$.  It suffices to prove that this cover does not split completely over the original component, that is, that the reduction 
\begin{equation}\label{Ered}
\prod_{i=1}^{\rho} (x - \ol{x}_i)^{a_i}
\end{equation} 
of the right hand side of (\ref{Etamecover}) modulo the maximal ideal is not an $m$th power in $k(x)$.

Since $f$ is branched in $r$ different points, assumed to reduce to
pairwise distinct points of $\proj^1_k$, we know that there are at least $r$ different residue classes represented among the $\ol{x}_i$.
By the vanishing cycles formula (\ref{Evancycles}), Lemma \ref{Lramdenominator}, and Proposition
\ref{Pinvarianteq}, we have $\left(\sum_{i=1}^{\rho} a_i \right)/m \leq r-2$.  Since at least $r$ residue classes are represented among the 
$\ol{x}_i$, there is at least one residue class, comprised of
$\ol{x}_{i_1}, \ldots, \ol{x}_{i_{\nu}}$, such that $0 < \sum_{j=1}^{\nu} a_{i_j} < m$.  
This means that (\ref{Ered}) is not an $m$th power, and we are done.
\end{proof}

\section{Good reduction}\label{Sgoodred}

In this section, $k$ is an algebraically closed field of characteristic $p$, and $K_0 = \Frac(W(k))$.  The main result here is 
Proposition \ref{Pnotmult}.

\begin{prop}\label{Pnotmult}
Let $G$ be a finite group with nontrivial cyclic $p$-Sylow subgroup.
Let $K/K_0$ be a finite extension such that $e(K)< (p-1)/m_G$, and let $f: Y \to X = \proj^1$ be a $G$-cover defined over $K$, 
branched at distinct $K$-points $\{x_1, \ldots, x_r\}$, with $r \geq 3$.  
Let $R$ be the valuation ring of $K$, and suppose that there is a smooth model $X_R$ for $X$ 
such that the specializations of the branch points do not collide on the special fiber.
If $f$ has bad reduction, then the stable model of $f$ does not have multiplicative reduction over the original component.
\end{prop}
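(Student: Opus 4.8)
The plan is to assume for contradiction that the stable model has multiplicative reduction over the original component, and to derive a contradiction with the hypothesis $e(K) < \frac{p-1}{m_G}$. By Lemma \ref{Letaletail}, since $f$ has bad reduction, the original component $\ol{X}_0$ is an inseparable component, so there is a well-defined decomposition group $D_{\ol{V}}$ attached to a component $\ol{V}$ above it. First I would pass, as in the proofs of \S\ref{Saux}, from $f$ to its strong auxiliary cover $f^{str}$: by Remark \ref{Raux} the inseparable components and the reduction behavior over the original component are unchanged under induction and prime-to-$p$ quotient, so it suffices to produce the contradiction for $f^{str}$, whose group is of the form $\ints/p^s \rtimes \ints/m$ with faithful action (Proposition \ref{Paux} (vi)).

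The heart of the argument is to exploit the group-theoretic structure forced by Lemma \ref{Lnoncomm}, which tells us $m_{D_{\ol{V}}} > 1$; in particular $D_{\ol{V}}$ is \emph{non-commutative}, containing a subgroup isomorphic to $\ints/p \rtimes \ints/m'$ for some $m' \mid m$, $m' > 1$. I would then set up the local extension of complete DVRs over the generic point of $\ol{V}$: multiplicative reduction means the totally ramified $P$-extension $B/C$ (in the notation of \S\ref{Smultred}) is potentially of $\mu_{p^s}$-type with respect to $\Frac(R) = K$. Taking the $\ints/p$-subextension and applying Lemma \ref{Lpimpliespn}, this forces the corresponding $\ints/p$-piece to be potentially of $\mu_p$-type with respect to $K$. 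On the other hand, Corollary \ref{Cgetalphap} applies precisely to a non-abelian $\ints/p \rtimes \ints/m'$-extension $L/K$ with intermediate field $M$ fixed by $\ints/p$: if the $\ints/p$-extension $L/M$ is \emph{naively ramified}, then it is \emph{not} potentially of $\mu_p$-type with respect to any DVF inside $M$, contradicting multiplicative reduction.

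So the crux reduces to verifying that $L/M$ is naively ramified, and this is where the hypothesis on $e(K)$ enters through Lemma \ref{Llowramindex}: a ramified $\ints/p$-extension of a complete DVF whose absolute ramification index is less than $p-1$ must be naively ramified. The main obstacle — and the step I expect to require the most care — is bookkeeping the ramification index of the relevant base field $M$ (the fixed field of $\ints/p$ inside the local extension over $\ol{V}$). The field $K$ has $e(K) < \frac{p-1}{m_G}$, but $M$ arises after adjoining the branch data and the prime-to-$p$ part coming from the $\ints/m$-action, so $e(M) \leq m_{G} \cdot e(K) < p-1$; the factor $m_G$ is exactly what the hypothesis was designed to absorb. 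Once $e(M) < p-1$ is established, Lemma \ref{Llowramindex} makes $L/M$ naively ramified, Corollary \ref{Cgetalphap} forbids $\mu_p$-type, and the contradiction is complete. The delicate point is ensuring the DVF $F \subseteq M$ with respect to which multiplicative type is tested (namely one built over $K$) genuinely sits inside $M$ and that the ramification estimate for $M/K$ is the correct bound $m_G$ rather than something larger.
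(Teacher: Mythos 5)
You have identified exactly the right local ingredients---Lemma \ref{Lnoncomm} for non-commutativity, Lemma \ref{Llowramindex} to get naive ramification, Corollary \ref{Cgetalphap} to rule out $\mu_p$-type, and Lemma \ref{Lpimpliespn} to pass from $\mu_p$ to $\mu_{p^n}$---and these are precisely the tools the paper combines. But your opening move, replacing $f$ by its strong auxiliary cover, breaks the argument. The auxiliary cover is only constructed over some finite extension $K'/K$ (in practice containing the field of stable reduction), and nothing bounds $e(K')$; once you pass to $f^{str}$ you have discarded the hypothesis $e(K) < \frac{p-1}{m_G}$, which is the sole arithmetic input of the proposition. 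Concretely, the fixed field $M$ of $\ints/p$ in your local tower then contains $K'$, so the estimate $e(M) \leq m_G\, e(K) < p-1$---the step you yourself flag as delicate---fails in general. Worse, for a three-point cover $f^{str}$ is of multiplicative type (Proposition \ref{Pmulttype}) and genuinely does have multiplicative reduction (Proposition \ref{Pmulttypered}), so there is no contradiction to be found intrinsically in $f^{str}$: the obstruction lives entirely in the fact that the original $f$ is defined over the small field $K$. The paper keeps this information by never base-changing the cover before the ramification estimate: it takes the normalization $f_R \colon Y_R \to X_R$ of the smooth model in $K(Y)$, so that the base DVR $A = \hat{\mc{O}}_{X_R, \eta_{\ol{X}}}$ at the generic point of the original component has $e(A) = e(K)$ on the nose; the fixed ring of inertia is unramified over $A$, the prime-to-$p$ part of inertia contributes a factor dividing $m_\Delta \mid m_G$, and hence $e(C) \leq e(K)\, m_G < p-1$. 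The auxiliary cover enters only through the purely group-theoretic conclusion of Lemma \ref{Lnoncomm} (that $m_{D_{\ol{V}}} > 1$), which requires no control on fields of definition.

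A second omission: having shown that the relevant $\ints/p^n$-extension $E'/C'$ over $R'$ (with $p$th roots of unity adjoined) is not potentially of $\mu_{p^n}$-type, one must still relate it to the extension of complete DVRs that actually appears in the stable model, which lives over the larger ring $R''$. The paper does this by verifying that $E' \otimes_{R'} R''$ stays a domain over each summand of $C' \otimes_{R'} R''$---a linear-disjointness claim proved from the fact that $\Gal(D'/C')$ does not commute with any lift of the prime-to-$p$ element $g$. Without this, the failure of $\mu_{p^n}$-type over $R'$ says nothing about the extension seen by the stable model. Your proposal does not address this passage.
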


\begin{proof}
Suppose $f$ has bad reduction.
Let $f_R: Y_R \to X_R$ be the normalization of $X_R$ in $K(Y)$, and let $\ol{f}: \ol{Y} \to \ol{X}$ be the special fiber of $f_R$.
Pick an irreducible component $\ol{V}$ of $\ol{Y}$, and let $\eta_{\ol{V}}$ and $\eta_{\ol{X}}$ be the respective generic points  
(note that $\ol{V} \to \ol{X}$ is automatically finite).
Then $\hat{\mc{O}}_{Y_R, \eta_{\ol{V}}}/\hat{\mc{O}}_{X_R, \eta_{\ol{X}}}$ is a Galois extension of mixed characteristic $(0, p)$ complete
DVRs containing $R$.  
Let $\Delta$ be the Galois group of this extension, and $I$ the inertia group.  Then $I$ is normal in $\Delta$, and the $p$-Sylow subgroup 
of $I$ (which is nontrivial by Lemma \ref{Letaletail}) is characteristic in $I$, so it is normal in $\Delta$.  
By \cite[Corollary 2.4]{Ob:vc}, there is a prime-to-$p$ normal subgroup $N$ of $\Delta$ such that $\Delta/N \cong P \rtimes \ints/m_{\Delta}$, 
where $P$ is a $p$-group and $\ints/m_{\Delta}$ acts faithfully on $P$.  

Let $P' \leq P$ be the (nontrivial) $p$-Sylow subgroup of $I/N$, and let $P'' < P'$ be the unique subgroup of index $p$.
Write $$E \supseteq D \supseteq C \supseteq B \supseteq A,$$
where
$$E := (\hat{\mc{O}}_{Y_R, \eta_{\ol{V}}})^N, \ D := E^{P''}, \ 
C := E^{P'}, \ B := E^{I/N}, \ A := E^{\Delta/N} = \hat{\mc{O}}_{X_R, \eta_{\ol{X}}}.$$  
Since $m_{\Delta} | m_G$ and $e(B) = e(A) = e(K)$, we have $$e(C) \leq e(B)m_{\Delta} \leq e(K) m_G < p-1.$$   

Furthermore, let $R'/R$ be a finite extension containing $\zeta_p$ over which $f$ attains a stable model.  Let $E'$ (resp.\ $D'$, $C'$, $B'$, $A'$)
be the normalization of the compositum of $E$ (resp.\ $D$, $C$, $B$, $A$) and $R'$ viewed as lying in some fixed algebraic closure of $A$. 
Then $$E' \supseteq D' \supseteq C' \supseteq B' \supseteq A'$$ are mixed characteristic complete DVR's, and 
$\Gal(E'/A')$ is a quotient of the decomposition group of an irreducible component of the stable reduction of $f$ over the original component.
If $f$ has multiplicative reduction over the original component, then $E'/C'$ is potentially of $\mu_{[E':C']}$-type with respect to $\Frac(R')$.  
To obtain a contradiction, we will show that $D'/C'$ is a $\ints/p$-extension that is not potentially of $\mu_p$-type with respect to $\Frac(R')$ 
(Lemma \ref{Lpimpliespn} gives the contradiction).  To do this, it suffices to show that $D/C$ is not potentially
of $\mu_p$-type with respect to $\Frac(R)$, and that $D \not \subseteq C'$.  
 
Now, Lemma \ref{Lnoncomm} shows that $m_{\Gal(E'/A')} > 1$, so $\Gal(C'/A')$ has an element $g$ with prime-to-$p$ order $m > 1$.
Then $g$ acts nontrivially on $C$, so $D/C^{\langle g \rangle}$ is a $\ints/p \rtimes \ints/m$-extension, with faithful action of
$\ints/m$ on $\ints/p$.  Furthermore, $C^{\langle g \rangle}(\zeta_p)
\not \supseteq C$, as $g$ fixes $\zeta_p$ but acts nontrivially on $C$.  Since $e(C) < p-1$, Corollary \ref{Cgetalphap} and 
Remark \ref{Rgetalphap} applied
to $D/C^{\langle g \rangle}$ show that $D/C$ is not potentially of $\mu_p$-type with respect to $\Frac(R)$.  Furthermore,
$C'/C^{\langle g \rangle}$ is abelian, whereas $D/C^{\langle g \rangle}$ is not, thus $D \not \subseteq C'$.  We are done.
\end{proof}

\begin{theorem}\label{Tmain}
Let $G$ be a finite group with cyclic $p$-Sylow subgroup.
Let $K/K_0$ be a finite extension such that $e(K) < (p-1)/m_G$, and let $f: Y \to X = \proj^1$ be a three-point $G$-cover defined 
(as a $G$-cover) over $K$.  Then $f$ has potentially good reduction,
realized over a tame extension $L/K$ of degree dividing the exponent of the center $Z(G)$ of $G$. 
In particular, if $Z(G)$ is trivial, then $f$ has good reduction.
\end{theorem}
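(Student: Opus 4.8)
The plan is to obtain potentially good reduction as a contradiction between Corollary \ref{Cmultred} and Proposition \ref{Pnotmult}, and then to control the field of definition by a descent governed by $Z(G)$. If $p\nmid|G|$ the $p$-Sylow subgroup is trivial, the cover is tame, and potentially good reduction is classical (\cite{sga1}); so I would assume the cyclic $p$-Sylow subgroup is nontrivial, which is the standing hypothesis of Proposition \ref{Pnotmult}. The three branch points $0,1,\infty$ are distinct $K$-rational points whose specializations on $\proj^1_R$ do not collide, and by the remark following the statement (via \cite[Lemma 4.2.13]{Ra:sp}) all branching indices of $f$ are prime to $p$; thus $f$ satisfies the hypotheses of both Corollary \ref{Cmultred} and Proposition \ref{Pnotmult}. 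Now suppose for contradiction that $f$ has bad reduction. Corollary \ref{Cmultred} shows that the stable model of $f$ has multiplicative reduction over the original component. But the hypothesis $e(K)<\tfrac{p-1}{m_G}$ is exactly what Proposition \ref{Pnotmult} requires, and that proposition shows the stable model does \emph{not} have multiplicative reduction over the original component. This is a contradiction, so $\ol Y$ is smooth; that is, $f$ has potentially good reduction.

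To realize the good reduction over a controlled extension, I would exploit the rigidity of three-point covers. The smooth stable reduction $\ol f\colon\ol Y\to\ol X=\proj^1_k$ is a three-point $G$-cover over the algebraically closed field $k$, and such covers are rigid, so $\ol f$ has a unique characteristic-zero lift with branch locus $\{0,1,\infty\}$; since $k$ is algebraically closed, $K_0=\Frac(W(k))$ admits no nontrivial unramified extension, so this lift has good reduction and field of moduli $K_0$. By rigidity, $f_{\bar K}$ is isomorphic, as a $G$-cover, to this canonical lift. Consequently $f$ is a $K$-form of a cover with good reduction, classified by a class in $H^1(G_K,\Aut_G(f_{\bar K}))=H^1(G_K,Z(G))$, since the $G$-cover automorphisms of a connected $G$-Galois cover are exactly $Z(G)$.

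It then remains to untwist this class. Because $k$ is algebraically closed, every finite extension of $K$ is totally ramified, and a tame one is cyclic; as the canonical lift already has good reduction, it suffices to kill the twisting class over a tame cyclic extension $L/K$, whose Galois group embeds into $Z(G)$ and hence has order dividing $\exp Z(G)$. Over such $L$ the cover acquires good reduction, and if $Z(G)$ is trivial the class vanishes and $f$ has good reduction over $K$ itself.

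\emph{The main obstacle is precisely this last bookkeeping.} One must show the distinguishing cocycle factors through the tame (procyclic, prime-to-$p$) quotient of $G_K$, so that its image in $Z(G)$ is cyclic and the required extension is tame of degree dividing $\exp Z(G)$, rather than a possibly wild or non-cyclic one. Crucially, this tameness should be read off from the structure of the smooth stable model over a base with algebraically closed residue field, without appealing to the triviality of the wild monodromy, which is not known in this generality (cf.\ \S\ref{Sintro}).
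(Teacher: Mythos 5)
Your first half coincides with the paper's argument: assuming bad reduction, Corollary \ref{Cmultred} forces multiplicative reduction over the original component, while Proposition \ref{Pnotmult} (whose hypotheses you correctly check, including the prime-to-$p$ branching supplied by \cite[Lemme 4.2.13]{Ra:sp}) forbids it. That contradiction gives potentially good reduction exactly as in the paper.

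The second half has a genuine gap, and you have flagged it yourself: you never show that the distinguishing class in $H^1(G_K,\Aut_G(f_{\bar K}))=\Hom(G_K,Z(G))$ --- equivalently, that the minimal extension $L/K$ realizing good reduction, whose Galois group embeds in $Z(G)$ (this is \cite[Proposition 4.1.2]{Ra:sp}, which is what your twisting setup reproves) --- is tame. The paper's resolution is not a finer analysis of the stable model or of wild monodromy; it is a short group-theoretic observation that makes the question of wildness moot: under the standing hypotheses, $p \nmid |Z(G)|$. Indeed, if $p$ divided $|Z(G)|$, then by \cite[Corollary 2.4]{Ob:vc} the group $G$ (having cyclic $p$-Sylow) would admit a nontrivial $p$-group quotient; the corresponding quotient of $f$ would be a nontrivial three-point cover of $\proj^1$ in characteristic zero with $p$-power Galois group, forcing some branching index of $f$ to be divisible by $p$ and contradicting \cite[Lemme 4.2.13]{Ra:sp}. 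Hence $\Gal(L/K)\leq Z(G)$ has prime-to-$p$ order, so $L/K$ is tame, hence (being totally ramified over a complete DVF with algebraically closed residue field) cyclic, and its degree divides the exponent of $Z(G)$. Inserting this observation at the point where you wrote that ``the main obstacle is precisely this last bookkeeping'' closes your proof. Two smaller remarks: your appeal to ``rigidity'' of three-point covers in characteristic $p$ should be stated as uniqueness of lifts of \emph{tame} covers (injectivity of the specialization map of \cite{sga1}), which applies here because $\ol{f}$ has prime-to-$p$ branching; and the paper simply cites \cite[Proposition 4.1.2]{Ra:sp} rather than redoing the cocycle computation, which is a legitimate shortcut you could also take.
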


\begin{proof}
Since $e(K) < (p-1)/m_G$, \cite[Lemme 4.2.13]{Ra:sp} shows that the branching indices of $f$ are all prime-to-$p$. If $f$ has bad reduction, then
Corollary \ref{Cmultred} shows that the stable model of $f$ has multiplicative reduction over the original component.
However, by Proposition \ref{Pnotmult}, this is impossible.  Thus $f$ must have potentially good reduction.

By \cite[Proposition 4.1.2]{Ra:sp}, if $L/K$ is the minimal extension such that $f \times_K L$ has good reduction, then $\Gal(L/K)$ is a subgroup
of $Z(G)$.  If $p$ divides $|Z(G)|$, then \cite[Corollary 2.4]{Ob:vc} shows that $G$ has a quotient that is a nontrivial $p$-group.  
Thus $f$ must be branched with ramification indices divisible by $p$, which we have already seen is impossible. So $p \nmid \Gal(L/K)$.
This means that $\Gal(L/K)$ is cyclic, so its degree divides the exponent of $Z(G)$.
\end{proof}

\begin{remark}
Since all $p$-Sylow subgroups of $G$ are conjugate to each other, so are all subgroups of order $p$.  If $Q$ is such a subgroup, 
then the $G$-conjugacy class of a nontrivial element $q \in Q$ contains $m_G$ elements of $Q$.  Thus there are $(p-1)/m_G$ different 
conjugacy classes of elements of order $p$ in $G$, and the form of Theorem \ref{Tmain} stated above is equivalent to that in the introduction.
\end{remark}

\begin{remark}\label{Rraynaud1}
Raynaud asked (\cite[Question 6.2.2]{Ra:sp}) if Theorem \ref{Tmain} might hold when $G$ has a $p$-Sylow group of order $p$ and 
$r - 2 < p/m_G$, where $r$ is the number of branch points and all branch points are equidistant.  
For instance, should Theorem \ref{Tmain} hold for four-point covers with equidistant branch locus when $m_G \neq p-1$?
Unfortunately, Corollary \ref{Cmultred} need not hold for covers with more than three branch points, 
so it looks as though other techniques must be used for such covers.
\end{remark}

\begin{remark}\label{Rgeneral}
It is not hard to show (with essentially the same proof) that if $X$ is  
any smooth curve over $K$ of genus $g_X$ with good reduction, if $f: Y \to X$ is branched at $r$ $K$-points specializing to distinct points 
on a smooth model of $X$, and if $2g_X - 2 + r > 0$, then the analog of 
Proposition \ref{Pnotmult} holds as long as the analog of Lemma \ref{Lnoncomm} holds.  
Thus, if $f: Y \to X$ is as in Theorem \ref{Tmain}, then $f$ will have good reduction if one can
show that $f$ having bad reduction would imply the analogs of Corollary \ref{Cmultred} and Lemma \ref{Lnoncomm}.  It would be interesting
to see if this is the case when $g_X = 1$ and $r = 1$, which in many ways is analogous to the case $g_X = 0$ and $r = 3$ 
(for instance, the fundamental groups of $X \times_K \ol{K}$ are the same in both cases, and by \cite[Th\'{e}or\`{e}me 5.1.5]{Ra:sp}, 
Theorem \ref{Tmain} is true in both cases when $v_p(|G|) = 1$ and $G$ has trivial center).
\end{remark}

\begin{remark}\label{Rihara}
In the vein of Remark \ref{Rgeneral}, 
one method of attacking Ihara's question from the introduction would be to define a notion of multiplicative reduction for
$G$-covers ($G$ arbitrary), and show that all three-point covers with bad reduction have this type of reduction.  One might then be able to show
that if the cover is defined over a small field and has a split point, then it cannot have this type of reduction. 
This notion of multiplicative reduction should somehow reflect the fact the three-point covers are rigid (cf.\ \cite[Introduction]{We:def}).
\end{remark}

\section{Examples}\label{Sexamples}

Let $k$ be an algebraically closed field of characteristic $p$, and let $K_0 = \Frac(W(k))$.
We exhibit a family of three-point covers defined over $K_0$ that have good reduction, generalizing 
\cite[Example 5.12]{Ob:vc}. 
In particular, fix integers $n \geq 1$ and $m \geq 2$, and let $p \equiv 1 \pmod{m}$ be a prime not equal to $m+1$.
For any $m$, $n$, and $p$ as above, we construct (for infinitely many $q$) a $G := PGL_m(q)$-cover over $K_0$ with good reduction,
where $G$ has $p$-Sylow subgroup of size at least $p^n$.  Note that $G$ has trivial center.

Let $q$ be a prime power such that 
\begin{equation}\label{Egoodred}
q^m \equiv 1 \pmod{p^n}, \text{ but } q^j \not \equiv 1 \pmod{p^n} \text{ for } 1 \leq j < m.
\end{equation}
Since $p \equiv 1 \pmod{m}$, the group $(\ints/p^n)^{\times}$ has elements of exact order $m$, and (\ref{Egoodred}) has integer solutions $q$.
Then Dirichlet's theorem shows that there are infinitely many prime power solutions $q$ 
(in fact, infinitely many prime solutions).

Assume $q = \ell^d$ is a prime power satisfying (\ref{Egoodred}).  
We know by \cite[II, Satz 7.3]{Hu:eg}, along with an examination of the order of $G$, that $G = PGL_m(q)$ has a cyclic $p$-Sylow subgroup 
of order $p^{v_p(q^m - 1)} \geq p^n$. 
The same construction as in \cite[Example 5.12]{Ob:vc} now works to construct a three-point $G$-cover defined over $K_0$ with 
potentially good reduction to characteristic $p$.  Since it is brief, we include it here.

Consider $H := GL_m(q) \rtimes \ints/2$, where the $\ints/2$-action is inverse-transpose.
In \cite[II, Proposition 6.4 and Theorem 6.5]{MM:ig}, a rigid class vector
$(\tilde{C}_0, \tilde{C}_1, \tilde{C}_{\infty})$ is exhibited for 
$H/ \{\pm 1\}$, where $\tilde{C}_0$ has order $2$, $\tilde{C}_1$ has order $4$, and $\tilde{C}_{\infty}$ has order
$(q-1)\ell^a$ for some $a$ (this is because the characteristic polynomial for the elements of $\tilde{C}_{\infty}$ has eigenvalues of 
order $q-1$).  Since $p$ does not divide the order of any of the ramification indices, this triple is rational over $K_0$, so the corresponding
$H/ \{\pm 1\}$-cover is defined over $K_0$.  Thus there is a quotient $G \rtimes \ints/2$-cover $h: Y \to \proj^1$ defined over $K_0$. 

Let $X \to \proj^1$ be the quotient cover of $h: Y \to \proj^1$ corresponding to the group $G$.  
Then $X \to \proj^1$ is a cyclic cover of degree 2, branched at $0$ and $1$.
This means that $X \cong \proj^1$, and $Y \to X$ is branched at three points (the two points above 
$\infty$, and the unique point above $1$).  So we have constructed a three-point $G$-cover $f: Y \to X \cong \proj^1$ defined over $K_0$, 
such that all branch points have prime-to-$p$ branching index.  Since the $e(K_0)$ is $1$, and since
$m = m_G < p-1$, we have $e < (p-1)/m_G$.  We conclude that $f$ has good reduction using Theorem \ref{Tmain}.

\appendix

\section{Group-theoretic translation of Theorem \ref{Tmain}}\label{Agroup}

Let $k$ be an algebraically closed field of characteristic $p$, let $K_0 = \Frac(W(k))$, and let $K$ be an algebraic closure of $K_0$.
For any field $L$, write $U_L = \proj^1 \backslash \{0, 1, \infty\}$.  We then have the fundamental exact sequence
\begin{equation}\label{Efes}
1 \to \pi_1(U_K)^{p\text{-tame}} \to \pi_1(U_{K_0})^{p\text{-tame}} \to \Gal(K/K_0) \to 1,
\end{equation}
where ``$p$-tame" has the meaning from the introduction.  A finite three-point $G$-cover $f$ of $\proj^1$ defined over $K$ with prime-to-$p$
branching indices (which we will call a \emph{$p$-tame cover}) corresponds to the open normal subgroup $N_f$ of 
$\pi_1(U_K)^{p\text{-tame}}$.  One knows that $\pi_1(U_K)^{p\text{-tame}}/N_f \cong G$.  
The \emph{field of moduli} of a $G$-cover $f$ is the minimal extension $M_f/K_0$ contained in $K$
such that the action of $\Gal(K/M_f)$ on $\pi_1(U_K)^{p\text{-tame}}$ by conjugation preserves $N_f$ and descends to an inner automorphism on 
$\pi_1(U_K)^{p\text{-tame}}/N_f$ (this means that $\Gal(K/M_f)$ preserves the isomorphism class of $f$ when acting on the coefficients of the
equations for $f$).  We note that the conjugation action
is \emph{a priori} only an outer action, but that the field of moduli does not depend on which representative action is taken.
Since $K_0$ and all finite extensions have cohomological dimension $1$, \cite[Proposition 2.5]{CH:hf}
shows that $f$ can be defined over a given finite extension $L/K_0$ exactly when $L \supseteq M_f$.

Also, from the introduction, we have a surjection
$$pr: \pi_1(U_K)^{p\text{-tame}} \to \pi_1(U_k)^{\text{tame}}.$$
An element $\sigma \in \pi_1(U_{K})^{p\text{-tame}}$ is determined (up to conjugation) by its action on the system of $p$-tame three-point 
covers of $\proj^1_K$, and it is in the kernel of $pr$ precisely when it acts trivially on every such cover that has good reduction.  In other
words, it must be contained in the corresponding normal subgroup for each such cover.  As a 
consequence of Theorem \ref{Tmain} and the discussion above, we obtain the following group-theoretic restatement of the main theorem.

\begin{corollary}\label{Cmain}
Let $\sigma \in \pi_1(U_{K})^{p\text{-tame}}$.  If $\sigma \in \ker(pr)$, then $\sigma$ is contained in every finite index normal subgroup
$N$ of $\pi_1(U_{K})^{p\text{-tame}}$ such that both
\begin{enumerate}[(i)]
\item $\pi_1(U_{K})^{p\text{-tame}}/N \cong G$ has a cyclic (possibly trivial) $p$-Sylow subgroup.
\item There exists a subgroup $\Gamma \leq \Gal(K/K_0)$ of index less than $(p-1)/m_G$ such that
the conjugation action of $\Gamma$ on $\pi_1(U_{K})^{p\text{-tame}}$ from (\ref{Efes}) fixes $N$ and descends to an inner automorphism
of $\pi_1(U_{K})^{p\text{-tame}}/N$ (Note: this is automatic if $\pi_1(U_{K})^{p\text{-tame}}/N$ is prime to $p$, as all such covers can be 
defined over $K_0$).
\end{enumerate}
\end{corollary}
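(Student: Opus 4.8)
The plan is to deduce the corollary from Theorem~\ref{Tmain} through the dictionary between $p$-tame three-point covers and finite-index normal subgroups of $\pi_1(U_K)^{p\text{-tame}}$. By the discussion preceding the statement, $\sigma \in \ker(pr)$ exactly when $\sigma$ lies in $N_f$ for \emph{every} $p$-tame three-point cover $f$ of good reduction. It therefore suffices to show that any finite-index normal subgroup $N \trianglelefteq \pi_1(U_K)^{p\text{-tame}}$ satisfying (i) and (ii) equals $N_f$ for some cover $f$ with good reduction; granting this, $\sigma \in \ker(pr)$ immediately forces $\sigma \in N_f = N$.

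So I would fix such an $N$, set $G = \pi_1(U_K)^{p\text{-tame}}/N$, and let $f$ be the associated three-point $G$-cover over $K$, so that $N_f = N$. If $p \nmid |G|$, then $f$ is tame and has good reduction by \cite{sga1} (and, as the Note in (ii) records, is already defined over $K_0$), so this case is immediate. Assume then that $G$ has nontrivial cyclic $p$-Sylow subgroup. Condition (ii) supplies $\Gamma \leq \Gal(K/K_0)$ whose conjugation action fixes $N$ and descends to an inner automorphism of $G$; writing $L = K^{\Gamma}$ for its fixed field, we have $[L:K_0] = [\Gal(K/K_0) : \Gamma] < \frac{p-1}{m_G}$. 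By the definition of the field of moduli this forces $M_f \subseteq L$, and hence \cite[Proposition 2.5]{CH:hf} shows that $f$ can be defined over $L$. Because $K_0 = \Frac(W(k))$ is absolutely unramified, $e(L) \leq [L:K_0] < \frac{p-1}{m_G}$, so $f$, viewed as a three-point $G$-cover over $L$, meets the hypotheses of Theorem~\ref{Tmain} and has potentially good reduction. Its good reduction is realized over a finite tame extension $L'/L$, which we may take inside $K$; thus $f \times_L K$ has good reduction over the algebraically closed field $K$, and $\sigma \in \ker(pr)$ yields $\sigma \in N_f = N$, as desired.

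The only genuinely nontrivial step is the passage from the group-theoretic hypothesis (ii) to the arithmetic conclusion that $f$ is definable over a field $L$ with $e(L) < \frac{p-1}{m_G}$; this is precisely what lets Theorem~\ref{Tmain} apply, and it rests on the field-of-moduli criterion of \cite{CH:hf} together with the absolute unramifiedness of $K_0$. Everything else is bookkeeping: the identification of $N$ with $N_f$, the disposal of the prime-to-$p$ case via \cite{sga1}, and the observation that over the algebraically closed $K$ potential good reduction over a subfield is good reduction. In particular, the corollary carries no geometric content beyond Theorem~\ref{Tmain}, and I expect no real obstacle apart from stating the field-of-moduli translation cleanly.
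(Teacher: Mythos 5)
Your proposal is correct and follows exactly the route the paper intends: the paper gives no separate proof of Corollary~\ref{Cmain}, stating only that it follows from Theorem~\ref{Tmain} and the preceding discussion of fields of moduli, the criterion of \cite[Proposition 2.5]{CH:hf}, and the characterization of $\ker(pr)$, which is precisely what you unwind. The one step worth stating explicitly is that $\Gamma$ may be replaced by its closure in $\Gal(K/K_0)$ (which still fixes $N$ and acts by inner automorphisms, since $\Gal(K/M_f)$ is closed), so that $L = K^\Gamma$ is genuinely a finite extension of $K_0$ with $e(L) \leq [L:K_0] < \frac{p-1}{m_G}$.
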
 

\bibliographystyle{amsalpha}
\bibliography{literatur}

\end{document}